\newtheorem{thm}{Theorem}
\newtheorem{lem}[thm]{Lemma}
\newtheorem{cor}[thm]{Corollary}
\newtheorem{conj}[thm]{Conjecture}
\numberwithin{equation}{section}
\numberwithin{thm}{section}
\numberwithin{table}{section}
\def\squareforqed{\hbox{\rlap{$\sqcap$}$\sqcup$}}
\def\qed{\ifmmode\squareforqed\else{\unskip\nobreak\hfil
\penalty50\hskip1em\null\nobreak\hfil\squareforqed
\parfillskip=0pt\finalhyphendemerits=0\endgraf}\fi}
\def\sqrt{\mathrm{sqrt}}
\def \F {{\mathbb F}}
\def \Z {{\mathbb Z}}
\def\\{\cr}
\def\({\left(}
\def\){\right)}
\def\fl#1{\left\lfloor#1\right\rfloor}
\newcommand{\ov}{\overbar}
\newcommand{\overbar}[1]{\mkern 1.5mu\overline{\mkern-1.5mu#1\mkern-1.5mu}\mkern 1.5mu}
 \newcommand{\Mod}[1]{\ (\mathrm{mod}\ #1)}
 \newcommand{\slider}{11/20}
\begin{document}

\title[Square-free smooth polynomials in residue classes  ]
{Square-free smooth polynomials in residue classes and generators of irreducible polynomials}

 \author[C.~Bagshaw]{Christian Bagshaw}
 \address{School of Mathematics and Statistics, University of New South Wales.
 Sydney, NSW 2052, Australia}
 \email{c.bagshaw@unsw.edu.au}

\begin{abstract}  
Building upon the work of A. Booker and C. Pomerance (2017), we prove that for a prime power $q \geq 7$, every residue class modulo an irreducible polynomial $F \in \mathbb{F}_q[X]$ has a non-constant, square-free representative which has no irreducible factors of degree exceeding $\deg F -1$. We also give applications to generating sequences of irreducible polynomials. 
\end{abstract}  

\keywords{finite field, polynomial, square-free, smooth, prime generator, irreducible polynomial generator}
\subjclass[2010]{11T06, 11T24}

\maketitle

\tableofcontents

\section{Introduction}

\subsection{Motivation}
We recall that an integer $n$ is {\it $k$-smooth} if no prime divisor of $n$ exceeds $k$, and $n$ is {\it square-free} if it is not divisible by the square of a prime. 

For a prime $p$ we denote by $M(p)$ the smallest integer such that any residue class modulo $p$ can be represented by a $p$-smooth, square-free representative not exceeding $M(p)$, and we formally set $M(p) = \infty$ if no such value exists. Booker and Pomerance \cite{BP2017} prove that $M(p) < \infty$ for $p \geq 11$, give the bound $M(p) = p^{O(\log p)}$ and conjecture $M(p) \leq p^{O(1)}$. This conjecture has been settled in a more general setting in \cite{MS2020}, where it is shown that $M(p) = p^{3/2 + o(1)}$. 

The quantity $M(p)$ was initially studied for its applications to recursive prime generators, which take roots in Euclid's proof of the infinitude of primes. The Euclid-Mullin sequence $\{p_k\}$ is defined such that for every $k \geq 0$, $p_{k+1}$ is the smallest prime factor of $p_1...p_k + 1$. In \cite{M1963}, Mullin asks whether {\it every} prime occurs in this sequence. This remains an open question, but variations of this sequence have been investigated. Booker and Pomerance \cite{BP2017} consider two such variations, and use $M(p) < \infty$ for $p \geq 11$ to show that they each contain every prime. 

Here we obtain analogues of these results for polynomials over finite fields. Although our general approach is similar to that of Booker and Pomerance~\cite{BP2017}, we take advantage of very strong bounds of short character sums (see Lemma~\ref{lem:MonicSum} below) which are not available over the integers. On the other hand, the distinction between monic and non-monic polynomials presents some new technical challenges. 
\subsection{Main results} Given a prime power $q$, we denote by $\F_q$ the finite field of order $q$. We call a polynomial $f(X) \in \F_q[X]$ {\it $k$-smooth} if $f$ has no irreducible factors of degree exceeding $k$, and we say that $f$ is {\it square-free} if it is not divisible by the square of an irreducible polynomial. 

For an irreducible polynomial $F(X) \in \F_q[X]$ of degree $r$, we denote by $M_q(F)$ the smallest integer such that any non-zero residue class modulo $F$ has an $r$-smooth, square-free representative whose degree does not exceed $M_q(F)$. Again, we formally set $M_q(F) = \infty$ if no such value exists. In \cite{BS2021} it is shown that $M_q(F) = (2+o(1))r$ as $r \to \infty$, but it is not shown exactly for which $F$ we have $M_q(F) < \infty$. Here we prove the following:

\begin{thm}\label{thm:SqFreeSmooth}
Let $q \geq 7$ be a prime power, and let $F(X) \in \F_q[X]$ be an irreducible polynomial of degree $r \geq 2$. Then every non-zero residue class modulo $F$ has a non-constant, square-free, $r-1$-smooth representative.
\end{thm}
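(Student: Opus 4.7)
The plan is to follow the character-sum framework of Booker and Pomerance~\cite{BP2017}, exploiting the strong cancellation for character sums over monic polynomials in $\F_q[X]$ afforded by Lemma~\ref{lem:MonicSum}. Fix a non-zero residue class $a$ modulo $F$ and set
\[
N(a) = \#\bigl\{g \in \F_q[X] : 1 \le \deg g \le D,\ g \text{ square-free and } (r{-}1)\text{-smooth},\ g \equiv a \pmod F\bigr\}
\]
with $D = O(r)$ to be chosen. Since every irreducible factor of an $(r{-}1)$-smooth polynomial has degree at most $r-1 < r = \deg F$, any such $g$ is automatically coprime to $F$, so Dirichlet characters modulo $F$ apply without restriction. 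The goal is to show $N(a) \ge 1$ for every non-zero $a$.

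A useful preliminary step is to reduce the count to monic polynomials. Writing $g = cm$ with $c \in \F_q^*$ and $m$ monic, we have $g \equiv a \pmod F$ if and only if $m \equiv c^{-1}a \pmod F$, so $N(a) = \sum_{c \in \F_q^*} M(c^{-1}a)$, where $M(b)$ denotes the analogous count restricted to monic $m$. Applying character orthogonality to each $M(c^{-1}a)$ and then summing over $c$, only characters whose restriction to the subgroup $\F_q^* \subset (\F_q[X]/F)^* \cong \F_{q^r}^*$ is trivial survive, and we obtain
\[
N(a) = \frac{q-1}{q^r-1} \sum_{\chi|_{\F_q^*}=1} \overline{\chi(a)}\, T(\chi), \qquad T(\chi) = \sum_{m} \chi(m),
\]
where the sum defining $T(\chi)$ runs over monic, square-free, $(r{-}1)$-smooth polynomials $m$ with $1 \le \deg m \le D$, and there are exactly $(q^r-1)/(q-1)$ characters trivial on $\F_q^*$.

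The principal character $\chi_0$ produces the main term $T(\chi_0)$, which is just the number of admissible monic polynomials; for $D$ of order $2r$, elementary counting (every monic polynomial of degree at most $r-1$ is automatically $(r{-}1)$-smooth, and the square-free density contributes a factor $1-1/q$) gives $T(\chi_0) \gg q^D$ with an explicit constant depending on $q$. For each of the $(q^r - q)/(q-1)$ non-principal characters trivial on $\F_q^*$, Lemma~\ref{lem:MonicSum} supplies a Weil-type bound of the form $|T(\chi)| \ll r\, q^{D/2}$, so the total contribution of the non-principal characters to $N(a)$ is at most of order $r\, q^{D/2}$. Choosing $D$ slightly larger than $2r$ (with the implicit constant depending on $q$) then forces the main term to dominate, whence $N(a) \ge 1$, and scaling by a suitable element of $\F_q^*$ converts the resulting monic polynomial into a representative of $a$ itself.

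The principal obstacle is quantitative. To cover all $r \ge 2$ and all $q \ge 7$ uniformly, the constants hidden in both the main-term count and in Lemma~\ref{lem:MonicSum} must be tracked precisely enough to produce a strict inequality. The hypothesis $q \ge 7$ almost surely arises because for smaller $q$ the square-free density factor $(1-1/q)$ and the Weil-type constants combine to make the estimate too tight to close; in addition, the low-degree cases $r = 2, 3$ may require separate verification by ad hoc construction of representatives in each coset of $\F_q^*$ inside $\F_{q^r}^*$.
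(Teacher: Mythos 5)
Your approach has a genuine gap that you cannot close with the tools you have named, and it is in fact the same obstacle that led Booker and Pomerance (and the present paper) to a very different argument. You define
\[
T(\chi) = \sum_m \chi(m),
\]
where the sum runs over \emph{monic, square-free, $(r-1)$-smooth} polynomials with $1 \le \deg m \le D$ and $D$ is of order $2r$, and you assert that Lemma~\ref{lem:MonicSum} gives $|T(\chi)| \ll r\,q^{D/2}$. But Lemma~\ref{lem:MonicSum} bounds character sums over \emph{all} monic polynomials of a fixed degree $t$. The square-free restriction can indeed be removed by M\"obius sieving (exactly as Corollary~\ref{cor:SqFreeSum} does), but the $(r-1)$-smoothness restriction cannot: once $D \ge r$, the smooth polynomials of degree $t$ form a sparse, arithmetically structured subset of all monics of degree $t$, and there is no way to pass from the complete sum bound of Lemma~\ref{lem:MonicSum} to a bound on the sum restricted to that subset without losing far more than a factor of $r$. (Sieving out large irreducible factors by inclusion--exclusion reintroduces roughly $q^r$ terms and destroys the saving.) Your main-term estimate $T(\chi_0) \gg q^D$ has the same problem implicitly: once $D \ge r$ the fraction of degree-$t$ monics that are $(r-1)$-smooth drops sharply, so the "elementary counting" step is not elementary either.

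The paper sidesteps this by never summing over smooth polynomials of degree exceeding $r-1$: it works exclusively with polynomials of degree in $[1,r)$, for which $(r-1)$-smoothness is automatic. The price is that each nonzero residue class has only a single representative of degree $<r$, so a direct orthogonality/averaging argument like yours has nothing to average over. Instead the paper passes to cosets of subgroups $H_{d,F} \le \F_{q^r}^*$: Lemma~\ref{lem:coset_rep} shows (via the character-sum bound of Corollary~\ref{cor:SqFreeSum}, which only needs degrees $<r$) that each coset of $H_{d,F}$ with $d<r$ contains a square-free representative of degree $<r$; a counting argument produces a large set $\mathcal{A}$ of elements having many representations as $uv \pmod F$ with $u,v$ irreducible of degree $<r$; Lev's sumset theorem (Lemma~\ref{thm:Lev}) is then applied to the discrete logarithms of $\mathcal{A}$ to show $H_{d,F} \subseteq \mathcal{A}^k$ for some admissible $d<r$ and $k<2r+1$; and a combinatorial avoidance argument (Lemma~\ref{lem:h_rep}) selects, from the many available factorizations, ones with no repeated irreducible, yielding a square-free $(r-1)$-smooth representative for each $h \in H_{d,F}$. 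Writing an arbitrary nonzero class as (coset representative) times (element of $H_{d,F}$) then finishes. None of these steps appear in your proposal; without an analogue of Lev's theorem or some other mechanism to build smooth representatives multiplicatively from short factors, the proposal cannot reach the conclusion.
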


Of course if we drop the condition that the representative is to be non-constant, then the condition $r \geq 2$ can be changed to $r \geq 1$. We also conjecture the following:
\begin{conj}\label{conj:allq}
Theorem~\ref{thm:SqFreeSmooth} holds for any prime power $q\geq 3$, and for $q =2$ as long as $r \geq 4$. 
\end{conj}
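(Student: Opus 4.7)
My plan is to adapt the character-sum method of Booker and Pomerance~\cite{BP2017} to $\F_q[X]$, exploiting the sharp character-sum estimates available in the function-field setting (Lemma~\ref{lem:MonicSum}). Fix a threshold $D$ (to be chosen below), let $\cS_D$ denote the set of non-constant, square-free, $(r-1)$-smooth $g\in\F_q[X]$ of degree at most $D$, and note that every $g\in\cS_D$ is automatically coprime to $F$, since $F$ is irreducible of degree $r$ while $g$ has no irreducible factor of degree $\ge r$. Orthogonality of characters modulo $F$ then gives
$$
N(a):=\#\{g\in\cS_D:g\equiv a\pmod F\}
=\frac{1}{q^r-1}\sum_{\chi\bmod F}\overline\chi(a)\,S(\chi),\qquad S(\chi):=\sum_{g\in\cS_D}\chi(g),
$$
and the whole task is to show that the trivial-character main term $|\cS_D|/(q^r-1)$ strictly dominates the contribution from non-trivial $\chi$, uniformly in $a\ne 0$.

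The key simplification is to factor out the leading coefficient. Writing $g=c\tilde g$ with $c\in\F_q^\ast$ and $\tilde g$ monic, one has $S(\chi)=\bigl(\sum_{c\in\F_q^\ast}\chi(c)\bigr)T(\chi)$, with $T(\chi)$ the corresponding monic sum. Hence $S(\chi)=0$ whenever $\chi$ is non-trivial on $\F_q^\ast$, and only the $(q^r-1)/(q-1)-1$ non-trivial characters trivial on $\F_q^\ast$ contribute; this is also the place where the monic vs.\ non-monic dichotomy mentioned in the introduction enters. For the main term I would bound $|\tilde\cS_D|\gg q^D$ by inclusion-exclusion on the square-free and $(r-1)$-smooth conditions, using that the number of monic irreducibles of degree $d$ is $\approx q^d/d$.

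For the error I would use the smoothness decomposition
$$
T(\chi)=U(\chi)-\sum_{\substack{P\text{ monic irred}\\ r\le\deg P\le D}}\chi(P)\,V_P(\chi),
$$
where $U(\chi)$ is the sum over \emph{all} non-constant monic square-free $g$ with $\deg g\le D$, and $V_P(\chi)$ is a monic square-free sum of length $\le D-\deg P$. Taking $D\le 2r-1$ makes this exact (any non-smooth polynomial has a unique irreducible factor of degree $\ge r$), and M\"obius inversion converts each square-free sum into an unrestricted monic sum to which Lemma~\ref{lem:MonicSum} applies in each degree slice. Summing over $P$ then yields $|T(\chi)|\ll r\,q^{D/2}$ up to logarithmic factors, after which the condition $N(a)>0$ reduces, roughly, to $q^{D-(r-1)}\gg r\,q^{D/2}$, i.e.\ $D\gtrsim 2(r-1)+2\log_q r$. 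This is compatible with $D\le 2r-1$ precisely when $q$ is sufficiently large relative to $r$.

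The principal obstacle is the small-$r$ regime, especially $r=2$ and $r=3$, where the Weil savings in Lemma~\ref{lem:MonicSum} yield only a modest constant and the two constraints $D\gtrsim 2(r-1)+2\log_q r$ and $D\le 2r-1$ leave almost no slack. For these cases I would argue directly: the residue ring is the quadratic (resp.\ cubic) extension $\F_{q^r}$, and admissible representatives $c\prod_i(X-\alpha_i)$ map to $c\prod_i(\beta-\alpha_i)\in\F_{q^r}^\ast$ for a root $\beta$ of $F$, so the theorem reduces to showing that the multiplicative monoid generated by $\F_q^\ast$ and the coset $\beta-\F_q$ exhausts $\F_{q^r}^\ast$; for $r=2$ this is accessible by a subgroup argument in $\F_{q^2}^\ast$ once $q\ge 7$. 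Pinning down the exact threshold $q\ge 7$ at which the asymptotic estimate and the small-$r$ verification meet up is where the real technical difficulty will lie.
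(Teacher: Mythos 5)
The statement you were asked to prove is \textbf{not a theorem in the paper---it is an open conjecture}, supported only by numerical evidence (a SageMath search over all $F$ with $q,r\le 5$, which found exactly two exceptional cases over $\F_2$, both of degree $3$). The paper offers no proof, and your sketch does not supply one.

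More importantly, the approach you propose is essentially the one the paper already uses to prove Theorem~\ref{thm:SqFreeSmooth} for $q\ge 7$, and the paper explains in its concluding comments precisely why that approach \emph{fails} in the range the conjecture covers. After reducing to the key inequalities~(\ref{eq:lem_IrredCount_ineq}), (\ref{eq:enough_representations}), (\ref{eq:coset_inequality}) and (\ref{eq:simple_coset_inequality}), the author observes that for $q=4$ and $q=3$ inequality~(\ref{eq:simple_coset_inequality}) \emph{never} holds, (\ref{eq:coset_inequality}) only kicks in at $r\ge 22$ and $r\ge 49$ respectively, and for $q=2$ neither holds for any $r$. Your own analysis confirms this obstruction: you conclude that the constraints $D\gtrsim 2(r-1)+2\log_q r$ and $D\le 2r-1$ are ``compatible \dots precisely when $q$ is sufficiently large relative to $r$,'' which is exactly the regime already covered by the theorem, not by the conjecture. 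Your final paragraph even speaks of pinning down ``the exact threshold $q\ge 7$,'' but the conjecture asserts the result for $q\ge 3$ (and $q=2$, $r\ge 4$), i.e.\ precisely below that threshold. The proposed fallback of a direct ``subgroup argument in $\F_{q^2}^\ast$'' for $r=2$ is stated as a hope rather than carried out, and would in any case leave the genuinely hard small-$q$, moderate-$r$ cases untouched; those are the ones the paper says would require ``a large-scale computing project.'' In short, there is a real gap: the character-sum machinery provably cannot close the conjecture for $q\le 5$ with the available bounds, and no substitute argument is provided.
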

Subsection \ref{subsec:conjecture} contains comments on this conjecture. 

In direct analogy to \cite[Section 5]{BP2017}, we can apply Theorem \ref{thm:SqFreeSmooth} to generate sequences of irreducible polyomials. In the following two results, the conditions $q \geq 7$ and $p \geq 7$ can be changed to $q \geq 3$ and $p \geq 3$ if Conjecture \ref{conj:allq} is settled. 

\begin{cor}\label{cor:plus_n/d_sequence}
Let $q \geq 7$ be an odd prime power. Starting with all degree $1$ polynomials as the first terms in the sequence, recursively define a sequence of irreducible polynomials in $\F_q[X]$ where if $N$ is the product of the terms generated so far, take as the next term some prime factor of $h + N/h$ where $h|N$. Such a sequence can be chosen to contain every irreducible polynomial. 
\end{cor}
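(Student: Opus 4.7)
The plan is to argue by induction on the degree $r$ of the target monic irreducible $F \in \F_q[X]$. The base case $r = 1$ is immediate from the initialization. For the inductive step, fix an irreducible $F$ of degree $r \geq 2$ and assume that every monic irreducible of degree less than $r$ has already appeared, so that at any later stage the running product $N$ is divisible by every monic irreducible of degree $\leq r-1$. The key observation is that, since any $(r-1)$-smooth $h$ is coprime to $F$, adding $F$ as a prime factor of $h + N/h$ for some $h \mid N$ is equivalent to solving $h^2 \equiv -N \pmod F$, which has a solution if and only if $-N$ is a quadratic residue in $\F_{q^r} \cong \F_q[X]/(F)$.

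\emph{Residue case.} If $-N$ is a quadratic residue modulo $F$, take $c \in \F_{q^r}^*$ with $c^2 \equiv -N$ and apply Theorem~\ref{thm:SqFreeSmooth} to the class of $c$ to obtain a non-constant, square-free, $(r-1)$-smooth representative $h$. Since $h$ is $(r-1)$-smooth and square-free while $N$ contains every irreducible of degree $\leq r-1$, one has $h \mid N$; then $h + N/h \equiv (h^2 + N)/h \equiv 0 \pmod F$, and $F$ may be chosen as the next term.

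\emph{Non-residue case.} If $-N$ is a non-residue, the idea is first to insert a preparatory step that multiplies $N$ by an irreducible $\pi$ that is a non-residue modulo $F$; afterwards $-N\pi$ is a residue and the previous case applies. To find such $\pi$, let $\chi$ denote the quadratic character of $\F_{q^r}^*$. Since $-N$ is a non-square, the quadratic $t^2 + N$ is irreducible over $\F_{q^r}$, so $f(t) := t(t^2 + N) \in \F_{q^r}[t]$ is square-free of degree $3$, and the Weil bound yields
\[
\Bigl| \sum_{t \in \F_{q^r}} \chi(f(t)) \Bigr| \leq 2 q^{r/2},
\]
which is strictly less than $q^r - 1$ for $q \geq 7$ and $r \geq 2$. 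Hence some $t^* \in \F_{q^r}^*$ satisfies $\chi(f(t^*)) = -1$, equivalently $\chi\bigl((t^{*2}+N)/t^*\bigr) = -1$. Applying Theorem~\ref{thm:SqFreeSmooth} to the class of $t^*$ produces a non-constant, square-free, $(r-1)$-smooth $h \equiv t^* \pmod F$ (necessarily $h \mid N$), and a short computation shows $\chi_F(h + N/h) = -1$. By multiplicativity of $\chi_F$, at least one monic irreducible factor $\pi$ of $h + N/h$ must be a non-residue modulo $F$; this $\pi$ is chosen as the next term.

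\textbf{Main obstacle.} The principal technical point is in the non-residue case, where one must ensure that the character identity $\chi_F(h + N/h) = -1$ is not absorbed entirely by the leading coefficient $c \in \F_q^*$ of $h + N/h$ (in which case no monic irreducible factor need be a non-residue). For $r$ even this is automatic since every element of $\F_q^*$ is a square in $\F_{q^r}^*$. For $r$ odd one must arrange that $c$ is a square in $\F_q^*$, most cleanly by forcing $c = 1$, i.e., $h + N/h$ monic. Since divisors of $N$ are automatically monic, this reduces to ensuring $2\deg h \neq \deg N$, which can be arranged either by exploiting flexibility in the choice of representative from Theorem~\ref{thm:SqFreeSmooth} (there are roughly $q^r/2$ admissible classes $t^*$ available) or by first advancing the sequence a few steps so that $\deg N$ has the parity opposite to $2 \deg h$.
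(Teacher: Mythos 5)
Your proposal follows the same route as the paper: split on whether $-N$ is a quadratic residue modulo $F$, use Theorem~\ref{thm:SqFreeSmooth} in each case to realize the needed residue class as a non-constant, square-free, $(r-1)$-smooth divisor $h$ of $N$, and in the non-residue case invoke a Hasse/Weil bound for $\sum_y \chi\bigl(y(y^2+N)\bigr)$ to first insert a non-residue prime factor $g$ of some $h + N/h$. This matches the paper's argument step for step.

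The ``main obstacle'' you flag is, however, not an obstacle. In $\F_q[X]$, prime factors are determined only up to units, so ``some prime factor of $h + N/h$'' includes every associate $c'\pi$ (with $c' \in \F_q^*$) of a monic irreducible factor $\pi$. Write $h + N/h = c\,\pi_1^{e_1}\cdots\pi_m^{e_m}$ with $c \in \F_q^*$ and $\pi_i$ monic irreducible. If some $\pi_i$ with $e_i$ odd has $\chi(\pi_i) = -1$, take $g = \pi_i$; otherwise multiplicativity forces $\chi(c) = -1$, and $g = c\pi_1$ is then an irreducible factor of $h + N/h$ with $\chi(g) = \chi(c)\chi(\pi_1) = -1$. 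In either case an acceptable $g$ exists, so the leading coefficient of $h + N/h$ needs no special handling, and your proposed workaround (controlling the parity of $\deg h$, or advancing the sequence first) is unnecessary. Incidentally, your remark that ``divisors of $N$ are automatically monic'' is false — every unit multiple of a divisor is again a divisor — though this does not affect the rest of the argument.
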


Now for a prime $p$ we introduce a total order on $\F_p[X]$ which we take from \cite[Definition 2.4]{KS2008}. For $f(X) := \sum_{k=0}^ma_kX^k \in \F_p[X]$ we set $$I(f) := \sum_{k=0}^m a_kp^k.$$ 
We define a total order on $\F_p[X]$ by $f_1 \leq f_2$ if $I(f_1) \leq I(f_2)$. We prove the following:

\begin{cor}\label{cor:plus_1_sequence}
Let $p \geq 7$ be a prime. Starting with $X$ as the first term in the sequence, recursively define a sequence of monic irreducible polynomials in $\F_p[X]$ where if $N$ is the product of the terms generated so far take 
$$F = \min \{g~\text{monic and irreducible}~:~ g \nmid N,~ g|h+1~\text{for some}~h|N\} $$
as the next term in the sequence. This sequence produces all monic irreducible polynomials in $\F_p[X]$ in order. 
\end{cor}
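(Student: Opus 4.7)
My plan is to induct on the step number $k$, showing that after $k$ terms the sequence equals the list of the $k$ smallest monic irreducible polynomials of $\F_p[X]$ in the ordering induced by $I$. The base case is built in since the first term is $X$, which has $I(X) = p$, the smallest value of $I$ on non-constant monic polynomials. For the inductive step, I assume the first $k$ terms are exactly the $k$ smallest monic irreducibles, write $N$ for their product, and let $F_0$ be the $(k+1)$-st smallest monic irreducible; the goal is to show that $F_0$ is the next term selected.

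Two verifications are needed. First, no monic irreducible $F < F_0$ is eligible, because by the inductive hypothesis each such $F$ already divides $N$, violating the rule's requirement $F \nmid N$. Second, $F_0$ itself must be eligible: I need $h \mid N$ with $h + 1 \neq 0$ and $F_0 \mid h + 1$. For $\deg F_0 \geq 2$, I would apply Theorem~\ref{thm:SqFreeSmooth} to the non-zero class $-1 \pmod{F_0}$, obtaining a non-constant, square-free, $(\deg F_0 - 1)$-smooth representative $h$. A short check using the definition of $I$ shows that every monic polynomial of degree less than $\deg F_0$ precedes $F_0$ in the ordering, so every monic irreducible factor of $h$ lies among the first $k$ terms and divides $N$. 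Combined with square-freeness, this upgrades to $h \mid N$, and non-constancy of $h$ gives $h + 1 \neq 0$.

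The case $\deg F_0 = 1$ falls outside the hypothesis $r \geq 2$ of Theorem~\ref{thm:SqFreeSmooth} and must be handled directly. Here $F_0 = X + a$ with $a \in \F_p \setminus \{0\}$, since $X$ is already the first term, and the explicit choice $h = a^{-1} X$ works: it divides $N$ because $X$ does, $h + 1 \neq 0$, and $h(-a) + 1 = -1 + 1 = 0$ so that $F_0 \mid h + 1$.

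The step I expect to require the most care is translating the Theorem's output into a divisor of $N$. As the authors flag in the introduction, the representative supplied by Theorem~\ref{thm:SqFreeSmooth} is not guaranteed to be monic, so its leading coefficient is some unit in $\F_p^*$; this could conceivably obstruct the claim $h \mid N$, but the issue evaporates because divisibility of polynomials is invariant under multiplication by units. Beyond this, the remaining ingredient is the essentially trivial calculation that monic polynomials of smaller degree come earlier in the ordering, so the argument reduces to a clean packaging of Theorem~\ref{thm:SqFreeSmooth} together with the degree-one case done by hand.
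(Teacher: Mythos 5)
Your proof is correct and follows essentially the same route as the paper's: reduce to showing that the smallest not-yet-produced monic irreducible $F_0$ is eligible, handle $\deg F_0 = 1$ by hand, and for $\deg F_0 \geq 2$ apply Theorem~\ref{thm:SqFreeSmooth} to the class $-1 \pmod{F_0}$ and use square-freeness plus $(\deg F_0 - 1)$-smoothness to conclude $h \mid N$. You spell out several steps the paper leaves implicit (the explicit $h = a^{-1}X$ for the degree-one base case, the degree-monotonicity of the $I$-ordering, and the unit-of-the-leading-coefficient issue), but the underlying argument is the same.
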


\subsection{Cases to consider} 
Before proceeding, we define the following subsets of $\mathbb{N} \times \mathbb{N}$, where $q$ is always a prime power:
\begin{align*}
\Omega_0 =& \{(7, r):r \geq 6\} \cup \{(8, r):r \geq 6 \}\cup \{(9, r):r \geq 5 \}\\
&\cup \{(11, r):r \geq 5 \} \cup\{(q,r):13 \leq q < 23, r \geq 4\}\\
&\cup\{(q,r):23 \leq q < 64, r \geq 3\}\cup\{(q,r):q \geq 64, r \geq 2\}
\end{align*}
and
\begin{align*}
\Omega_1 =& \{(q,r): q \geq 7, r \geq 2\} \setminus \Omega_0.
\end{align*}
From this point forward, we say ``$F$ such that $(q,r) \in \Omega_i$" to mean an irreducible polynomial $F(X) \in \F_q[X]$ of degree $r$ such that $(q,r) \in \Omega_i$. 

To prove Theorem \ref{thm:SqFreeSmooth} we must show it's conclusion holds for any $F$ such that $(q,r) \in \Omega_0\cup\Omega_1$. We have split these pairs into two distinct sets, as each set will be dealt with differently.  

\section{Preparations}
\subsection{Character sums and counts for polynomials}
We firstly need a few results regarding square-free polynomials in $\F_q[X]$. It is  convenient to introduce an analogue of  the classical M{\"o}bius function 
$\mu$ for polynomials in $\F_q[X]$:
$$
\mu_q(g) 
=
\begin{cases}
(-1)^k, &g \text{ is square-free and a product of $k$ distinct}\\& \text{irreducible factors,}\\
0, &\text{otherwise}.
\end{cases}
$$

The following is a classical result which is well known in the literature. For example, see \cite[Equation (1-19)]{KR2015}. 
\begin{lem}
For any integer $m \geq 2$, there are $(q-1)(q^m - q^{m-1})$ square-free polynomials of degree exactly $m$. 
\end{lem}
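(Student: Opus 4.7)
The plan is to reduce to the monic case and then invoke the standard generating-function identity. First I would observe that every polynomial of degree exactly $m$ factors uniquely as $c f$ with $c \in \F_q^*$ and $f$ monic of degree $m$, and that $cf$ is square-free if and only if $f$ is (since $c$ is a unit). Hence the count in question equals $(q-1)$ times the number of monic square-free polynomials of degree $m$, so it suffices to prove that this latter count is $q^m - q^{m-1}$ for $m \geq 2$.

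For the monic count I would use the zeta function of $\F_q[X]$, namely
\[
Z(u) = \sum_{f \text{ monic}} u^{\deg f} = \sum_{n \geq 0} q^n u^n = \frac{1}{1-qu}.
\]
Grouping each monic $f$ as $f = g h^2$ with $g$ monic square-free and $h$ monic, and using unique factorization, one has the identity
\[
Z(u) = S(u) \cdot Z(u^2),
\]
where $S(u) = \sum_{g \text{ monic sq-free}} u^{\deg g}$. Solving gives
\[
S(u) = \frac{Z(u)}{Z(u^2)} = \frac{1 - q u^2}{1 - q u} = 1 + q u + (q^2 - q) u^2 + (q^3 - q^2) u^3 + \cdots,
\]
and reading off the coefficient of $u^m$ for $m \geq 2$ yields $q^m - q^{m-1}$.

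Combining the two steps gives $(q-1)(q^m - q^{m-1})$ square-free polynomials of degree exactly $m$. There is no real obstacle here; the only minor care needed is the passage from monic to non-monic via the $(q-1)$ scalar factor, and the requirement $m \geq 2$ which is what ensures the coefficient formula $q^m - q^{m-1}$ holds (for $m = 1$ one would instead get $q$, reflecting the fact that every degree-$1$ polynomial is automatically square-free). Since the result is standard, I would simply cite \cite[Equation (1-19)]{KR2015} for the monic case and supply only the one-line reduction.
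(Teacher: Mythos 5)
Your proposal is correct and essentially matches the paper's treatment: the paper simply cites this as a classical fact (with \cite[Equation (1-19)]{KR2015} supplying the monic count $q^m - q^{m-1}$ for $m\geq 2$), and the statement's extra factor of $q-1$ is exactly the scalar reduction you describe. The zeta-function computation $S(u) = Z(u)/Z(u^2)$ you supply is the standard proof of the monic count and is correct, but the paper itself does not reproduce it.
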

We then have a simple corollary:
\begin{cor}\label{cor:SqFreeCount}
For $m \geq 2$,
$$
\sum_{\substack{\deg f \in [1, m)}} \mu^2_q(f) = 
(q-1)q^{m-1}.
$$
\end{cor}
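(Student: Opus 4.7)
The plan is to split the sum according to the degree $k = \deg f$ and apply the preceding lemma to each term. The lemma directly counts square-free polynomials of each degree $k \geq 2$, but we must handle $k=1$ separately since the stated formula $(q-1)(q^m - q^{m-1})$ is only asserted for $m \geq 2$. This is the one small point of care, but it is not a genuine obstacle: every degree-$1$ polynomial $aX + b$ with $a \in \F_q^*$ is automatically square-free (being irreducible), so there are exactly $q(q-1)$ of them.

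First I would write
\[
\sum_{\deg f \in [1,m)} \mu_q^2(f) = \sum_{k=1}^{m-1} \#\{f \in \F_q[X] : \deg f = k,\ f \text{ square-free}\}.
\]
For $k=1$ this count is $q(q-1)$, and for $2 \leq k \leq m-1$ the preceding lemma gives $(q-1)(q^k - q^{k-1}) = (q-1)^2 q^{k-1}$. (If $m=2$ the second range is empty and there is nothing to add beyond the $k=1$ term.)

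Next I would evaluate the geometric sum for $k \geq 2$:
\[
\sum_{k=2}^{m-1} (q-1)^2 q^{k-1} = (q-1)^2 \cdot \frac{q^{m-1} - q}{q-1} = (q-1)(q^{m-1} - q).
\]
Adding the $k=1$ contribution gives
\[
q(q-1) + (q-1)(q^{m-1} - q) = (q-1)\bigl(q + q^{m-1} - q\bigr) = (q-1)q^{m-1},
\]
which is the claimed identity. The $m=2$ case agrees directly since the sum reduces to $q(q-1) = (q-1)q^{m-1}$. No step requires more than elementary algebra once the degree-$1$ polynomials are accounted for separately.
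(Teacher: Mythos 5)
Your proof is correct and is the natural argument; the paper simply states this as a ``simple corollary'' of the preceding lemma and supplies no proof. You were right to handle $k=1$ separately (the lemma's formula $(q-1)(q^m-q^{m-1})$ would give $(q-1)^2$ at $m=1$, not the correct count $q(q-1)$), and the geometric-series computation checks out.
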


Next, for any positive integer $t$ let $A_t$ denote the set of all monic polynomials of degree exactly $t$ in $\F_q[X]$. Also, let $F(X) \in \F_q[X]$ be irreducible of degree $r \geq 1$ and let $\chi$ be a non-principal character modulo $F$. The following is given in~\cite[Theorem~1.3]{H2020} (related bounds can also be found in \cite[Theorem 1]{BLL2017}). 

\begin{lem}\label{lem:MonicSum}
For any positive integer $t$, 
$$\left| \sum_{f \in A_t}\chi(f)\right| \leq q^{t/2}\binom{r-1}{t}. $$
\end{lem}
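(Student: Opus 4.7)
The plan is to recognize this as a classical consequence of Weil's Riemann Hypothesis for $L$-functions over function fields. I would introduce the Dirichlet $L$-function associated to $\chi$,
\[
L(u,\chi) \;=\; \sum_{f \text{ monic}} \chi(f)\, u^{\deg f} \;=\; \sum_{t \geq 0}\Bigl(\sum_{f \in A_t} \chi(f)\Bigr) u^t,
\]
so that the quantity we want to bound is exactly the $t$-th coefficient of $L(u,\chi)$. The first step is to show that $L(u,\chi)$ is in fact a polynomial in $u$ of degree at most $r-1$. This is standard: writing the sum degree by degree, each coefficient with $t \geq r$ can be expressed as a complete character sum over a union of cosets of the additive span of $\{1,X,\dots,X^{r-1}\}$ modulo $F$, and non-principality of $\chi$ forces it to vanish. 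Equivalently, one uses that $A_t$ for $t \geq r$ is a disjoint union of translates of the set of polynomials of degree $<r$, on which $\chi$ sums to zero.

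The second step is the core input: Weil's theorem (the Riemann Hypothesis for curves over finite fields, applied here to the $L$-function of a non-trivial Dirichlet character on $\mathbb{F}_q[X]$) gives the factorisation
\[
L(u,\chi) \;=\; \prod_{i=1}^{r-1}\bigl(1 - \alpha_i u\bigr) \qquad \text{with } |\alpha_i| = q^{1/2}
\]
for every $i$. I would cite this directly rather than reprove it, since it is the deep arithmetic-geometric ingredient and exactly the place where the $q^{t/2}$ savings come from.

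Once this factorisation is in hand, the rest is bookkeeping. Expanding the product,
\[
\sum_{f \in A_t} \chi(f) \;=\; [u^t]\, L(u,\chi) \;=\; (-1)^t\, e_t(\alpha_1,\ldots,\alpha_{r-1}),
\]
where $e_t$ is the $t$-th elementary symmetric polynomial. By the triangle inequality,
\[
\Bigl|\sum_{f \in A_t} \chi(f)\Bigr| \;\leq\; \sum_{1 \leq i_1 < \cdots < i_t \leq r-1} |\alpha_{i_1}|\cdots|\alpha_{i_t}| \;=\; \binom{r-1}{t}\, q^{t/2},
\]
which is exactly the claimed bound. (Note the bound is automatically trivial when $t > r-1$, since then $\binom{r-1}{t}=0$ and, consistently, $L(u,\chi)$ has degree at most $r-1$ so the coefficient vanishes.)

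The only real obstacle is the appeal to Weil, both for the precise degree $r-1$ of $L(u,\chi)$ and for the modulus $q^{1/2}$ of its inverse roots; the rest is symmetric function manipulation. In practice I would simply quote the relevant statement from Rosen's book on function field arithmetic, or from the very reference \cite{H2020} (or \cite{BLL2017}) where this lemma is taken from, and thereby avoid rederiving the $L$-function formalism from scratch.
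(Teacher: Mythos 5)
The paper does not prove Lemma~\ref{lem:MonicSum}; it is taken directly from~\cite[Theorem~1.3]{H2020} (with the remark that related bounds appear in~\cite[Theorem~1]{BLL2017}). So there is no in-paper proof to compare against. That said, the argument you sketch is the standard one and is essentially correct: identify $\sum_{f\in A_t}\chi(f)$ as the $u^t$-coefficient of the Dirichlet $L$-function $L(u,\chi)$; use the fact that for a non-principal character modulo an irreducible $F$ of degree $r$ this is a polynomial of degree at most $r-1$; invoke Weil's Riemann Hypothesis for the inverse roots; and bound the $t$-th elementary symmetric polynomial by the triangle inequality.

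One small imprecision: it is not true in general that every inverse root $\alpha_i$ has $|\alpha_i| = q^{1/2}$. When $\chi$ is trivial on $\F_q^*$ (an ``even'' character in the function-field sense), $L(u,\chi)$ has a trivial zero at $u=1$, contributing an inverse root of modulus $1$, and the polynomial can also have degree strictly less than $r-1$. What is uniformly true, and what Weil actually gives, is $|\alpha_i| \le q^{1/2}$ for each $i$, together with $\deg L(u,\chi) \le r-1$. That weaker statement is all your triangle-inequality step uses, so the conclusion $\bigl|\sum_{f\in A_t}\chi(f)\bigr| \le \binom{r-1}{t} q^{t/2}$ still follows; you should just phrase the Weil input as an upper bound on the moduli and on the degree rather than equalities. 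With that correction, your outline is a valid self-contained proof of the lemma, which is more than the paper offers, since it simply cites the result.
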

This leads to the following:

\begin{cor}\label{cor:SqFreeSum}
For any positive integer $m \geq 2$ we have 
$$\bigg{|}\sum_{\substack{\deg f \in[1, m)}}\mu_q^2(f)\chi(f) \bigg{|}\leq q^{(m-1)/2}(q-1)(2^{r-1}-1)\frac{m}{2}.$$
\end{cor}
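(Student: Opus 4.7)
The plan is to reduce the sum first to a sum over monic polynomials, then use a M\"obius-type identity to remove the square-free indicator $\mu_q^2$, apply Lemma~\ref{lem:MonicSum} to each resulting character sum, and finally sum over degrees with a geometric-series estimate.

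First, for each $t$ with $1\leq t\leq m-1$, I would split $f$ into its leading coefficient $a\in\F_q^{*}$ and its monic part $g\in A_t$. Since $\mu_q^2(ag)=\mu_q^2(g)$ and $\chi(ag)=\chi(a)\chi(g)$,
\[
\sum_{\deg f=t}\mu_q^2(f)\chi(f)=\Big(\sum_{a\in\F_q^{*}}\chi(a)\Big)\sum_{g\in A_t}\mu_q^2(g)\chi(g),
\]
with the scalar factor bounded in absolute value by $q-1$ (it equals $q-1$ if $\chi$ is trivial on the constants and $0$ otherwise).

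Next, I would apply the polynomial analog of the classical identity $\mu^2(g)=\sum_{d^2\mid g}\mu_q(d)$, write $g=d^2h$ with $d,h$ monic, and combine Lemma~\ref{lem:MonicSum} (with $\sum_{h\in A_0}\chi(h)=1$ handling the edge case $t=2s$) with the crude bounds $|\mu_q(d)|\leq 1$ and $|A_s|=q^s$ to obtain
\[
\Big|\sum_{g\in A_t}\mu_q^2(g)\chi(g)\Big|\leq q^{t/2}\sum_{s=0}^{\lfloor t/2\rfloor}\binom{r-1}{t-2s}.
\]
Since all indices $t-2s$ share the parity of $t$, this binomial sum is bounded by the sum of $\binom{r-1}{j}$ over $j$ of one fixed parity, which equals $2^{r-2}\leq 2^{r-1}-1$ for $r\geq 2$.

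Summing over $t=1,\dots,m-1$ yields
\[
\Big|\sum_{\deg f\in[1,m)}\mu_q^2(f)\chi(f)\Big|\leq (q-1)(2^{r-1}-1)\sum_{t=1}^{m-1}q^{t/2},
\]
so it remains to show $\sum_{t=1}^{m-1}q^{t/2}\leq (m/2)\,q^{(m-1)/2}$, equivalently $\sum_{k=0}^{m-2}q^{-k/2}\leq m/2$. I expect this final geometric-series estimate to be the main obstacle: it holds comfortably for $q\geq 7$ once $m$ is not too small, but requires a direct check for the edge cases $m\in\{2,3\}$. The earlier steps are essentially a standard M\"obius--character-sum template once Lemma~\ref{lem:MonicSum} is in hand.
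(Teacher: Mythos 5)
Your overall strategy — M\"obius inversion to remove $\mu_q^2$, factoring out the unit factor $q-1$, applying Lemma~\ref{lem:MonicSum}, and bounding binomial sums by powers of $2$ — matches the paper's. The decomposition differs in one structurally significant way, and it is exactly where the gap appears. The paper fixes $k=\deg d$ (the degree of the square divisor) as the outer variable, so that inside the $k$-sum one has $t \le m-2k-1$ and hence $q^k\,q^{t/2}\le q^{(m-1)/2}$ \emph{for every term}; the $m/2$ factor then comes simply from the number $\lfloor m/2\rfloor$ of values of $k$, with no geometric series to estimate. You instead fix the total degree $t'=\deg f$ as the outer variable, bound the binomial sum by $2^{r-2}$, and are left needing
\[
\sum_{t=1}^{m-1} q^{t/2} \;\le\; \frac{m}{2}\,q^{(m-1)/2},
\qquad\text{i.e.}\qquad
\sum_{k=0}^{m-2} q^{-k/2}\le \frac m2.
\]
This fails in several small cases: for $q=2$ with $m\in\{3,4,5\}$ and for $q=3$ with $m=3$ the left side exceeds $m/2$. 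The slack you have (namely $2^{r-2}\le 2^{r-1}-1$) is an equality at $r=2$, so there is no room to absorb the deficit there. Since the corollary is stated with no lower bound on $q$, this is a genuine gap, not just an ``edge case to check.'' You flagged the geometric-series step as the likely obstacle; the fix is not a direct check but simply to swap the order of summation. Writing your double sum as $\sum_{s\ge 0}\sum_{u\ge 0}$ with $s=\deg d$, $u=\deg h$, $2s+u\le m-1$, the bound $q^s q^{u/2}\le q^{(m-1)/2}$ holds term by term and the $m/2$ comes from counting the $s$-values; no geometric series is needed and the estimate then holds for all $q\ge 2$.
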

\begin{proof}
We let $S$ denote the sum in question. Firstly, standard inclusion-exclusion gives 
\begin{align*}
    S &= \bigg{|}\sum_{\substack{g \in \F_q[X] \\ g~\text{monic}}}\mu_q(g)\sum_{\substack{\deg f \in[1,m) \\ g^2 |f}}\chi(f)\bigg{|}\\
    &= \bigg{|}\sum_{\substack{g \in \F_q[X] \\ g~\text{monic}}}\mu_q(g)\chi(g^2)\sum_{\deg h \in [1, m-2\deg g)}\chi(h)\bigg{|}.
\end{align*}

Now, we can factor out the leading coefficients on polynomials in the innermost sum and apply the triangle inequality to obtain
\begin{align*}
   S 
   &\leq (q-1)\sum_{\substack{g \in \F_q[X] \\ g~\text{monic}}}\bigg{|}\sum_{\substack{\deg h \in [1, m-2\deg g)\\h~\text{monic}}}\chi(h)\bigg{|}.
\end{align*}
The innermost sum is empty unless $m - 2\deg g \geq 2$, so we have
\begin{align*}
   S 
   &\leq (q-1) \sum_{k=0}^{\fl{\frac{m-2}{2}}}\sum_{\substack{g \in A_k}}\sum_{t=1}^{m-2k - 1}\bigg{|}\sum_{\substack{h \in A_t}}\chi(h) \bigg{|}.
\end{align*}
We note that trivially, 
$$\bigg{|}\sum_{\substack{h \in A_t}}\chi(h) \bigg{|} \leq q^t. $$

Combining this with Lemma \ref{lem:MonicSum} we obtain
\begin{align}\label{eq:min_sqfreesum}
   S 
   &\leq(q-1) \sum_{k=0}^{\fl{\frac{m-2}{2}}}q^k\sum_{t=1}^{m-2 k - 1}q^{t/2}\min\bigg{\{}q^{t/2},\binom{r-1}{t}\bigg{\}}
\end{align}
which will be used later. Further noting that $2^{r-1} \geq \sum_{t=0}^{m-2k-1}\binom{r-1}{t}$ for any $m,k$ we obtain 
$$S \leq (q-1)q^{(m-1)/2}(2^{r-1}-1)\sum_{k=0}^{\fl{\frac{m-2}{2}}}1 \leq (q-1)q^{(m-1)/2}(2^{r-1}-1)\frac{m}{2} $$
which completes the proof. 
\end{proof}

For a positive integer $k\geq 1$ we let $\pi_q(k)$ denote the number of irreducible polynomials of degree exactly $k$ over $\F_q$. If $\mu$ denotes the classical M{\"o}bius function, it is well known that 
\begin{align*}
  \pi_q(k) = \frac{q-1}{k}\sum_{d|k}\mu(d)q^{k/d}  
\end{align*}

which implies 
\begin{align}\label{eq:irred_ineq}
    \frac{q^k-2q^{k/2}}{k} \leq \frac{\pi_q(k)}{q-1} \leq \frac{q^k}{k}.
\end{align}

The following is a technical result, purely used as an aid in proving Theorem \ref{thm:SqFreeSmooth}. 

\begin{lem}\label{lem:IrredCount}
For any pair $(q,r) \in \Omega_0$, 
\begin{align}\label{eq:lem_IrredCount_ineq}
    \sum_{k=1}^{r-1}\pi_q(k) > \frac{q^r}{r}+2q^{r/2 + \slider} + (q-1)r^3+3.
\end{align}
\end{lem}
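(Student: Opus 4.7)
The plan is to apply the lower bound
$$\pi_q(k)\ge\frac{(q-1)(q^k-2q^{k/2})}{k}$$
from~\eqref{eq:irred_ineq} and estimate the two resulting sums. Summation gives
$$\sum_{k=1}^{r-1}\pi_q(k)\ge(q-1)\sum_{k=1}^{r-1}\frac{q^k}{k}-2(q-1)\sum_{k=1}^{r-1}\frac{q^{k/2}}{k}.$$
For the main sum, I would isolate the largest summand $(q-1)q^{r-1}/(r-1)$ and apply the trivial bound $1/k\ge 1/(r-2)$ to the remainder, giving (for $r\ge 3$)
$$(q-1)\sum_{k=1}^{r-1}\frac{q^k}{k}\ge\frac{(q-1)q^{r-1}}{r-1}+\frac{q^{r-1}-q}{r-2}.$$
Subtracting $q^r/r$ exposes an excess of order $q^{r-1}/r$ or larger beyond the target main term.

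For the error sum, the key observation is that since $q\ge 7$ the ratio of consecutive terms $\bigl(q^{(k+1)/2}/(k+1)\bigr)\big/\bigl(q^{k/2}/k\bigr)=k\sqrt q/(k+1)$ exceeds $\sqrt 7/2>1$, so the tail is geometrically dominated by its top term, yielding a bound of the form
$$2(q-1)\sum_{k=1}^{r-1}\frac{q^{k/2}}{k}\le \frac{2(q-1)q^{(r-1)/2}}{(r-1)\bigl(1-(r-1)/((r-2)\sqrt q)\bigr)},$$
which is of order $q^{(r+1)/2}$.

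Combining these reduces~\eqref{eq:lem_IrredCount_ineq} to a numerical inequality whose left side grows like $q^r/r^2$ and whose right side is $O(q^{r/2+11/20}+qr^3)$; the set $\Omega_0$ is designed exactly so that the inequality holds. The proof would then split into cases. For the infinite family $q\ge 64,\ r=2$, the sum equals $\pi_q(1)=q(q-1)$ and the claim reduces to $q^2/2-9q-2q^{31/20}+5>0$, which is immediate for $q\ge 64$. For each small-$q$ family ($q\in\{7,8,9,11\}$, and $q$ in the intervals $[13,23)$ or $[23,64)$ with the appropriate lower bound on $r$), one verifies the smallest admissible $(q,r)$ numerically and then propagates to larger $r$ by monotonicity, since the left side grows by roughly a factor of $q$ per unit step in $r$ while each term on the right grows by at most $\sqrt q$.

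The principal obstacle will be the tightness of the smallest boundary pairs, especially $(q,r)=(7,6)$, where the cruder estimate $\sum q^k/k\ge(q^r-q)/((r-1)(q-1))$ on the main sum together with the worst-case geometric ratio $2/\sqrt q$ on the error sum does not leave a positive margin. Retaining the leading term of the main sum separately and using the sharper geometric ratio $(r-1)/((r-2)\sqrt q)$ in the error bound (both taking advantage of the structure of the sums at their largest indices) are what allow the proof to close the gap in these extremal cases.
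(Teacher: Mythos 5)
Your overall strategy is essentially the paper's: lower-bound $\pi_q(k)$ via~\eqref{eq:irred_ineq}, verify the boundary pairs of $\Omega_0$ directly, and propagate to larger $r$. (The paper formalizes the propagation as an induction on $r$, adding a single term $\pi_q(r-1)$ per step and leaving the resulting inequality as a ``routine exercise in calculus''; you instead attempt explicit bounds on the whole sum and then invoke monotonicity, which amounts to the same thing.) The explicit estimates are where there is a genuine error.

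Your geometric bound on the error sum is invalid. Writing $a_k = q^{k/2}/k$, you correctly note the forward ratio $a_{k+1}/a_k = k\sqrt q/(k+1)$ is at least $\sqrt q/2 > 1$. But a domination of the form $\sum_{k\le r-1} a_k \le a_{r-1}/(1-\rho)$ requires $\rho$ to be the \emph{maximum} of the backward ratios $a_k/a_{k+1} = (k+1)/(k\sqrt q)$, which is $2/\sqrt q$ (attained at $k=1$), not the $(r-1)/((r-2)\sqrt q)$ in your display --- that is the \emph{minimum} backward ratio, attained at the last step. For $r\ge 4$ your stated bound is strictly smaller than the true sum; for instance at $(q,r)=(7,6)$ the sum $\sum_{k=1}^{5} 7^{k/2}/k \approx 50.5$ while your formula gives $\approx 49.2$. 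Your closing paragraph leans on exactly this invalid ``sharper geometric ratio'' to rescue the tight case $(7,6)$, so as written the argument does not close.

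The error is repairable, and more easily than you suggest. Substituting the correct ratio $2/\sqrt q$ gives the valid (larger) bound
$$2(q-1)\sum_{k=1}^{r-1}\frac{q^{k/2}}{k}\ \le\ \frac{2(q-1)\,q^{(r-1)/2}}{(r-1)\bigl(1-2/\sqrt q\bigr)},$$
and a direct numerical check at $(7,6)$ shows your sharper main-sum estimate (isolating $(q-1)q^{r-1}/(r-1)$ and bounding the remainder with $1/k\ge 1/(r-2)$) already leaves a positive margin against this valid error bound together with $q^r/r+2q^{r/2+11/20}+(q-1)r^3+3$. So the ``sharper ratio'' is not only unjustified but unnecessary. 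Two smaller points: the main-sum display $\frac{(q-1)q^{r-1}}{r-1}+\frac{q^{r-1}-q}{r-2}$ requires $r\ge 3$, so the $r=2$ family must be handled separately as you do; and once the extremal case of each $q$-band is verified with corrected estimates, the monotonicity argument you sketch in the last paragraph is the same induction the paper carries out.
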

\begin{proof}
We will proceed by induction on $r$, but firstly we define the following subset of $\Omega_0$:
\begin{align*}
    \Omega_0' =& \{(7, 6), (8,6), (9,5), (11,5)\} \cup\{(q,4):13 \leq q < 23\}
\\&\cup\{(q,3):23 \leq q < 64\}\cup\{(q,2):q \geq 64\}.
\end{align*}

 For a given prime power $q \geq 7$, the base case in the induction will be the unique $r$ such that $(q,r) \in \Omega_0'$. There are 22 pairs $(q,r) \in \Omega_0'$ with $q < 64$, and these can easily be manually checked to satisfy (\ref{eq:lem_IrredCount_ineq}). For $q \geq 64$ we show that the pair $(q,2)$ satisfies (\ref{eq:lem_IrredCount_ineq}). There are exactly $q^2-q$ (irreducible) polynomials of degree $1$, so we have 
 \begin{align*}
      &\sum_{k=1}^{1}\pi_q(k) - \frac{q^2}{2}-2q^{2/2 + \slider} - (q-1)2^3-3 \\
      &= q^2-q - \frac{q^2}{2}-2q^{2/2 + \slider} - (q-1)2^3-3
 \end{align*}
 which is greater than $0$ for $q \geq 53$, but we only need this for $q \geq 64$. 
 
 Suppose for a given pair $(q,r) \in \Omega_0\setminus \Omega_0'$, (\ref{eq:lem_IrredCount_ineq}) holds for the pair $(q,r-1)$. Then using this and (\ref{eq:irred_ineq}) we have 
 \begin{align*}
      &\sum_{k=1}^{r-1}\pi_q(k) - \frac{q^r}{r}-2q^{r/2 + \slider} - (q-1)r^3-3\\
      &\geq \frac{q^{r-1}}{r-1}+2q^{(r-1)/2 + \slider} + (q-1)(r-1)^3 + 3 + (q-1)\frac{q^r-2q^{r/2}}{r} \\
      &\quad\quad - \frac{q^r}{r}-2q^{r/2 + \slider} - (q-1)r^3-3.
 \end{align*}
 It is a routine exercise in calculus to show that this expression is greater than $0$ for any $(q,r)$ with $q \geq 7$ and $r \geq 2$, which more than covers $(q,r) \in \Omega_0\setminus\Omega_0'$ as desired. 
 \end{proof}

\subsection{Square-free  representatives}\label{subsec:SqFreeCoset}
In this section we follow closely the ideas presented by Booker and Pomerance in \cite[Sections 2 and 3]{BP2017}. For an irreducible polynomial $F(X) \in \F_q[X]$ of degree $r$ we can naturally identify $\F_q[X]/F(X) \cong \F_{q^r}$. With this in mind, for any positive integer $d$ dividing $q^r-1$ we define the subgroup of $\F_{q^r}^*$ 
$$H_{d,F} = \{h \in \F_{q^r}^* ~:~h^{(q^r-1)/d} =1\}. $$
Of course this is the unique subgroup of $\F_{q^r}^*$ of index $d$. 
\begin{lem}
Let $f,a \in \F_{q^r}^*$ and let $\chi$ be a character of $\F_{q^r}^*$ of order $d$. Then $\chi(f) = \chi(a)$ if and only if $f \in aH_{d,F}$. 
\end{lem}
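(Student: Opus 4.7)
The plan is to reduce the biconditional to a kernel statement and then use that $\F_{q^r}^*$ is cyclic to identify that kernel with $H_{d,F}$.

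First, since $\chi$ is a group homomorphism $\F_{q^r}^*\to\C^*$, the equation $\chi(f) = \chi(a)$ is equivalent to $\chi(fa^{-1}) = 1$, which is equivalent to $fa^{-1} \in \Ker(\chi)$, and hence to $f \in a\,\Ker(\chi)$. So the entire statement reduces to showing that $\Ker(\chi) = H_{d,F}$.

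Second, I would establish the inclusion $H_{d,F} \subseteq \Ker(\chi)$. Since $\chi$ has order $d$, for any $h \in H_{d,F}$ we have $\chi(h)^{(q^r-1)/d} = \chi\!\left(h^{(q^r-1)/d}\right) = \chi(1) = 1$, while also $\chi(h)^d = 1$ because $\chi^d$ is trivial. Since $\gcd(d,(q^r-1)/d)$ may not be $1$, this angle is slightly delicate, so instead I would argue more directly: $\chi$ factors through the quotient $\F_{q^r}^*/H_{d,F}$ because $\chi$ is determined on a generator $g$ of $\F_{q^r}^*$ by a $d$-th root of unity $\chi(g)$, and every element of $H_{d,F}$ has the form $g^{dj}$, which maps to $\chi(g)^{dj} = 1$.

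Third, I would upgrade this containment to equality by a counting argument. The image $\chi(\F_{q^r}^*)$ is a finite subgroup of $\C^*$ of order exactly $d$ (because $\chi$ has order $d$), so by the first isomorphism theorem $[\F_{q^r}^*:\Ker(\chi)] = d$. On the other hand, $H_{d,F}$ has index $d$ in $\F_{q^r}^*$ by its definition as the kernel of the $(q^r-1)/d$-power map on the cyclic group $\F_{q^r}^*$. Since $\F_{q^r}^*$ is cyclic of order $q^r-1$, it contains a unique subgroup of each divisor-order, hence a unique subgroup of index $d$; the inclusion $H_{d,F} \subseteq \Ker(\chi)$ together with equality of indices forces $H_{d,F} = \Ker(\chi)$.

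The proof is essentially routine character theory over a cyclic group, so I do not anticipate any genuine obstacle; the only point that requires a moment of care is making sure that the order of $\chi$ equals the index of its kernel, which I would justify by evaluating $\chi$ on a generator of $\F_{q^r}^*$ as described above.
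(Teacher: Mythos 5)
Your proof is correct and takes essentially the same approach as the paper: reduce to $\Ker(\chi) = H_{d,F}$, establish one containment by evaluating $\chi$ on powers of a generator of $\F_{q^r}^*$, and close the gap using the first isomorphism theorem together with the uniqueness of subgroups of a given order in the cyclic group $\F_{q^r}^*$. The only stylistic difference is that you state the kernel reduction up front, whereas the paper handles the two directions of the biconditional separately and in doing so effectively re-derives the same kernel equality.
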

\begin{proof}
If $f \in aH_{d,f}$ then $f = ah$ for some $h$ such that $h^{(q^r-1)/d} = 1$. Of course this gives $\chi(f) = \chi(a)\chi(h)$. We wish to show $\chi(h) = 1$. Let $g$ generate $\F_{q^r}^*$. Then there exists some $k$ such that $g^k = h$. This gives $g^{k(q^r-1)/d} = h^{(q^r-1)/d} = 1$, which means that $d | k$, so $k = d\ell$ for some positive integer $\ell$. Thus 
$$\chi(h) = \chi(g^k) = \chi(g^\ell)^d = 1 $$
since $\chi$ has order $d$.

Conversely, suppose $\chi(f) = \chi(a)$. Then since $\chi(fa^{-1}) = 1$, it suffices to show for any $h \in \F_{q^r}^*$, $\chi(h) = 1$ implies $h \in H_{d,F}$. Of course, if $\chi(h) = 1$ then $h \in \ker(\chi)$. By the first isomorphism theorem we have 
$$d = |G|/|\ker(\chi)|. $$
So $|\ker(\chi)|$ is the unique subgroup of $\F_{q^r}^*$ of order $|G|/d$, which is exactly $H_{d,F}$. 
\end{proof}

This then leads to the following:
\begin{lem}\label{lem:coset_rep}
Let $F$ be a polynomial such that $(q,r)\in \Omega_0$. Let $d$ be a divisor of $q^r-1$ with $d < r$. Then for every $a \in \F_{q^r}^*$ there exists some square-free $f$ with $1 \leq \deg f < r$ and $f \in aH_{d, F}$.   
\end{lem}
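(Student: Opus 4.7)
The approach is to apply character orthogonality on the cyclic quotient $\F_{q^r}^*/H_{d,F}$ (of order $d$) to count square-free polynomials $f \in \F_q[X]$ with $1 \leq \deg f < r$ lying in the coset $aH_{d,F}$. Under the identification $\F_q[X]/F(X) \cong \F_{q^r}$, every such $f$ is nonzero modulo $F$, so the count equals
$$N(a) = \frac{1}{d}\sum_{\chi} \overline{\chi(a)} \sum_{\substack{\deg f \in [1,r)}} \mu_q^2(f)\chi(f),$$
where $\chi$ ranges over the $d$ characters of $\F_{q^r}^*$ that are trivial on $H_{d,F}$. The target is to show $N(a) \geq 1$.

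By Corollary~\ref{cor:SqFreeCount} the principal character contributes $(q-1)q^{r-1}/d$. Bounding each of the remaining $d-1 \leq r-2$ non-principal inner sums by the triangle inequality together with the refined estimate \eqref{eq:min_sqfreesum} (with $m=r$), call the resulting bound $B(q,r)$; it then suffices to verify
$$(q-1)q^{r-1} > (r-2)\, B(q,r)$$
for every $(q,r) \in \Omega_0$. Since $N(a)$ is a non-negative integer, this strict inequality forces $N(a) \geq 1$.

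The main obstacle is establishing this inequality uniformly across $\Omega_0$. For pairs with $q$ comfortably larger than $2^r$, the cruder Corollary~\ref{cor:SqFreeSum} is already strong enough, reducing the task to the easily checked condition $q^{(r-1)/2} > (r-2)(2^{r-1}-1)r/2$, which takes care of most of $\Omega_0$. The difficulty concentrates on the sporadic small pairs such as $(7,6)$, $(8,6)$, and $(9,5)$, where the factor $2^{r-1}$ is comparable to or exceeds $q^{(r-1)/2}$ and the naive bound fails. For these I would evaluate $B(q,r)$ via the exact min-splitting in \eqref{eq:min_sqfreesum}, partitioning the inner $t$-sum at the crossover index $t^{\ast}$ where $\binom{r-1}{t} \approx q^{t/2}$ and using the binomial bound for $t \le t^\ast$ and the trivial bound $q^{t/2}$ for $t > t^\ast$. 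A finite numerical verification at the finitely many sporadic pairs then closes the argument.
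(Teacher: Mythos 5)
Your proposal is correct and follows essentially the same route as the paper: detect coset membership via orthogonality over the $d$ characters trivial on $H_{d,F}$, extract the principal-character main term $(q-1)q^{r-1}/d$ via Corollary~\ref{cor:SqFreeCount}, bound the remaining $d-1 < r-1$ non-principal terms by the hybrid min-estimate \eqref{eq:min_sqfreesum} (equivalently the cruder Corollary~\ref{cor:SqFreeSum} when $q$ is large relative to $2^r$), and close the small sporadic pairs by direct numerical verification. The paper's version uses the slightly looser factor $r-1$ in place of your $r-2$, but otherwise the decomposition and case split (the paper's $\Omega_0''$ vs.\ $\Omega_0 \setminus \Omega_0''$) mirror yours exactly.
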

\begin{proof}
We can assume $d > 1$, since otherwise we can let $f = X$. Let $\chi$ denote a character of $\F_{q^r}^*$ of order $d$. Now consider the expression
$$\sum_{i=1}^d\mu_q^2(f)\chi^i(f)\ov{\chi}^i(a) $$
for some non-zero $f \in \F_q[X]$ with $\deg f < r$. We have three cases to consider. Firstly, if $f$ is not square-free, then this expression is trivially $0$. Secondly, if $f$ is square-free and $\chi(f) = \chi(a)$, then this expression is equal to $d$. Finally, if $f$ is not square-free but $\chi(f) \neq \chi(a)$, then the expression is simply 
    $$\sum_{i=1}^d\xi^i = \frac{d}{k}\sum_{i=1}^k \xi^i $$
    where $\xi$ is a primitive $k$-th root of unity for some $k |d$. This sum is equal to $0$.

Thus, we see that
$$\frac{1}{d}\sum_{i=1}^d\sum_{\deg f \in [1, r)}\mu_q^2(f)\chi^i(f)\ov{\chi}^i(a) $$
counts the number of square-free polynomials $\deg f \in [1, r)$ with $\chi(f) = \chi(a)$. By the previous lemma, this is exactly the number of square-free polynomials $f \in a H_{d,F}$ with $\deg f \in [1,r)$.

We show this expression is greater than 0 for $(q,r) \in \Omega_0$. To do so, it is sufficient to show that the principal character dominates. That is, it suffices to show
\begin{align}\label{eq:principal_dominate}
    \sum_{\deg f \in[1, r)}\mu_q^2(j) > \bigg{|} \sum_{i=1}^{d-1}\sum_{\deg f \in [1, r)}\mu_q^2(f)\chi^i(f)\ov{\chi}^i(a) \bigg{|}. 
\end{align}

Using Corollary \ref{cor:SqFreeCount}, we have
\begin{align*}
    \sum_{\deg f \in [1,r)}\mu_q^2(j)
    = {(q-1)q^{r-1}}
\end{align*}
for $r>1$. 

We will split this discussion into two cases. We define the following subset of $\Omega_0$,
\begin{align*}
    \Omega_0'' =&\{(7, r):6 \leq r \leq 19 \}\cup\{(8, r):6 \leq r \leq 14 \}
\cup \{(9, r):5 \leq r \leq 10 \}\\\cup &\{(11, r):5 \leq r \leq 6 \}.
\end{align*}
Firstly we show (\ref{eq:principal_dominate}) is satisfied for $(q,r) \in \Omega_0''$, and then for $(q,r) \in \Omega_0\setminus \Omega_0''$. 

Applying (\ref{eq:min_sqfreesum}) and using $d < r$ we have

\begin{align*}
&\bigg{|} \sum_{i=1}^{d-1}\sum_{\deg f \in [1,r)}\mu_q^2(f)\chi^i(f)\ov{\chi}^i(a) \bigg{|}\\
& \hspace{5em} \leq (r-1)(q-1) \sum_{k=0}^{\fl{\frac{r-2}{2}}}q^k\sum_{t=1}^{r-2 k - 1}q^{t/2}\min\bigg{\{}q^{t/2}, \binom{r-1}{t}\bigg{\}}
\end{align*}
and thus it suffices to show 
\begin{align}\label{eq:coset_inequality}
q^{r-1} >(r-1) \sum_{k=0}^{\fl{\frac{r-2}{2}}}q^k\sum_{t=1}^{r-2 k - 1}q^{t/2}\min\bigg{\{}q^{t/2}, \binom{r-1}{t}\bigg{\}}.
\end{align}

It is simple to manually verify that this holds for all 31 pairs $(q,r) \in \Omega_0''$.

Secondly we consider $(q,r) \in \Omega_0\setminus \Omega_0''$. Using Corollary \ref{cor:SqFreeSum} and $d<r$ we obtain
\begin{align*}
&\bigg{|} \sum_{i=1}^{d-1}\sum_{\deg f \in [1,r)}\mu_q^2(f)\chi^i(f)\ov{\chi}^i(a) \bigg{|}\\
& \hspace{5em} \leq (r-1)(q-1)\frac{r}{2}q^{(r-1)/2}(2^{r-1}-1).
\end{align*}
and thus it suffices to show 
\begin{align}\label{eq:simple_coset_inequality}
q^{r-1} > (r-1)\frac{r}{2}q^{(r-1)/2}(2^{r-1}-1).
\end{align}
An appeal to basic calculus shows this holds for any pair $(q,r) \in \Omega_0\setminus \Omega_0''$. 
%%routine calculus shows this
\end{proof}

We now fix some $F$ with $(q,r) \in \Omega_0$. For $d|q^{r}-1$ with $d < r$, let $C_{d,F}$ denote a set of square-free coset representatives for $H_{d,F}$ of degree less than $r$, as we know exist. There are of course $d$ elements in $C_{d,F}$. Also, let $S_{d,F}$ denote the set of irreducible polynomials that divide some member of $C_{d,F}$ and let 
$$S_F = \bigcup_{\substack{d|q^{r}-1 \\ d < r}} S_{d,F}.$$ Noting that a polynomial of degree less than $r$ has at most $r-1$ monic irreducible factors, we have $\#S_{d,F} < (q-1)dr $ and thus using $d< r$ we trivially have $\#S_F < (q-1)r^3.$

Now let $K$ denote the number of irreducible polynomials of degree less than $r$ not in $S_F$. So we have 
$$K = \sum_{k=1}^{r-1}\pi_q(k) - \#{S}_F > \sum_{k=1}^{r-1}\pi_q(k) - (q-1)r^3. $$

For a non-zero polynomial $s$ with $\deg s < r$ let $w(s)$ denote the number of unordered pairs of irreducible polynomials $u,v \not\in S_F$ with $\deg u, \deg v < r$ and  $uv \equiv s \Mod{F}$. Set 
$$\mathcal{A} = \{s : \deg s < r,~ s \neq 0,~ w(s) > Kq^{-r/2 + \slider}\}, ~ A = \#\mathcal{A}. $$

\begin{lem}
$$A > \frac{q^r-1}{r} + 2.$$
\end{lem}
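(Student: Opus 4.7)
The plan is a simple first-moment / pigeonhole argument, comparing the total count $\sum_s w(s)$ against both the trivial pointwise bound on $w(s)$ and the threshold $Kq^{-r/2+\slider}$ defining $\cA$.

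First, I would pass to ordered pairs. Since $F$ is irreducible of degree $r$ while every admissible $u,v$ has $\deg u, \deg v < r$, the product $uv$ is not divisible by $F$, so each ordered pair contributes to a unique nonzero residue of degree less than $r$ modulo $F$. The total number of such ordered pairs is $K^2$, which translates to
$$\sum_{\substack{s \neq 0 \\ \deg s < r}} w(s) \;=\; \binom{K}{2}.$$
Moreover, for every fixed $s$ one has $w(s) \leq K/2$: each of the $K$ admissible $u$ determines its partner $v \equiv su^{-1} \pmod F$ uniquely by taking the representative of degree less than $r$, giving at most $K$ ordered pairs and hence at most $K/2$ unordered ones.

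Next, I would split the identity above according to membership in $\cA$, applying the pointwise bound $w(s) \leq K/2$ on $\cA$ and the defining bound $w(s) \leq K q^{-r/2+\slider}$ off $\cA$:
$$\binom{K}{2} \;\leq\; A \cdot \frac{K}{2} \;+\; (q^r-1)\cdot K q^{-r/2+\slider},$$
which rearranges (dividing by $K/2$) to
$$A \;\geq\; K - 1 - 2(q^r-1)q^{-r/2+\slider} \;\geq\; K - 1 - 2q^{r/2+\slider}.$$

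To close the argument, I would feed in the lower bound on $K$: combining the definition $K > \sum_{k=1}^{r-1}\pi_q(k) - \#S_F$ with the trivial estimate $\#S_F < (q-1)r^3$ derived just above and with Lemma~\ref{lem:IrredCount} gives
$$K \;>\; \frac{q^r}{r} + 2q^{r/2+\slider} + 3,$$
so that $A > q^r/r + 2 > (q^r-1)/r + 2$, as desired. The exponent $\slider$ in the definition of $\cA$ is calibrated exactly so that the two $2q^{r/2+\slider}$ terms cancel; the only delicate point is the passage from unordered to ordered pair counts, but the bound $w(s) \leq K/2$ is robust under either interpretation of "unordered pair" (with or without repetition), and any resulting $O(1)$ discrepancy is absorbed comfortably by the $+3$ slack carried through from Lemma~\ref{lem:IrredCount}.
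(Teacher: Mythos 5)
Your argument is correct and follows essentially the same route as the paper: the identity $\sum_s w(s) = \binom{K}{2}$, the pointwise bound $w(s) \leq K/2$, the split of the sum according to membership in $\cA$, and finally the input of Lemma~\ref{lem:IrredCount} together with $\#S_F < (q-1)r^3$. The only cosmetic differences are that the paper writes the off-$\cA$ contribution as $(q^r - A)K q^{-r/2 + \slider}$ rather than $(q^r-1)K q^{-r/2+\slider}$, and justifies $w(s) \leq K/2$ by observing that distinct pairs counted by $w(s)$ are disjoint rather than by the ordered-pair count you use; neither affects the final estimate.
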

\begin{proof}
Firstly, note that 
$$\sum_{\substack{\deg s < r \\ s \neq 0}}w(s) = \binom{K}{2} = \frac{1}{2}K(K-1). $$
Further, two distinct pairs counted by $f(s)$ cannot contain an irreducible in common, since all $u$ with $\deg u < r$ are distinct modulo $F$. Thus
$$f(s) \leq \frac{1}{2}K. $$
Since
$$\sum_{\substack{\deg s < r \\ s \not\in \mathcal{A}}}f(s) \leq (q^r-A)Kq^{-r/2 + \slider} $$
we can say 
$$\sum_{s \in \mathcal{A}}f(s) \geq \frac{1}{2}K(K-1)-(q^r-A)Kq^{-r/2 + \slider}. $$
Thus
\begin{align*}
    A
    &\geq \frac{2}{K}\sum_{s \in \mathcal{A}}f(s) 
    \geq K-1 - 2(q^r-A)q^{-r/2 + \slider}\\
    &\geq \sum_{k=1}^{r-1}\pi_q(k) - (q-1)r^3 -1 - 2q^{r/2 + \slider}.
\end{align*}
Applying Lemma \ref{lem:IrredCount} completes the proof.
\end{proof}

Next we need the following result due to Lev \cite[Theorem 2']{L1997}.
\begin{lem}\label{thm:Lev}
For a positive integer $B$ let $\mathcal{B} \subseteq \{1,...,B\}$ be a set of $n$ integers, and assume the positive integer $\kappa$ satisfies $\kappa \geq \frac{B-1}{n-2}-1$. Then there exists positive integers $d',k$ with $d' \leq \kappa$, $k \leq 2\kappa +1$ such that $k\mathcal{B}$ contains $B$ consecutive multiples of $d'$.  By $k\mathcal{B}$ we mean the set of integers that can be written as the sum of $k$ members of $\mathcal{B}$. 
\end{lem}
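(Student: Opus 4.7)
My plan is to begin by normalizing: translate $\mathcal{B}$ so that $0 \in \mathcal{B}$, writing $\mathcal{B} = \{0 = b_1 < b_2 < \cdots < b_n\}$ with $b_n \leq B-1$. The key elementary observation is that for any two elements $b_i < b_j$ at distance $d := b_j - b_i$, the iterated sumset $k\mathcal{B}$ contains the arithmetic progression $\{k b_i + j' d : 0 \leq j' \leq k\}$, i.e., $k+1$ values in arithmetic progression with common difference $d$; in particular, if $d \mid k b_i$ and $k+1 \geq B$, this alone produces $B$ consecutive multiples of $d$.

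This naive bound on $k$ is far too weak, however, since we need $k \leq 2\kappa + 1$ while $B$ may be much larger. To amplify, I would use that the consecutive gaps $g_i = b_{i+1} - b_i$ satisfy $\sum g_i \leq B - 1$ over $n-1$ indices, so by averaging there is some $g_{i_0} \leq (B-1)/(n-1) \leq (B-1)/(n-2) \leq \kappa + 1$. After a refinement step --- for example, selecting $d' = g_{i_0}$ and passing to the subset $\mathcal{B}_0 \subseteq \mathcal{B}$ lying in the arithmetic progression $b_{i_0} + d' \Z$, then pigeonholing across small candidate moduli if necessary to bring $d'$ from $\kappa+1$ down to $\kappa$ --- I would reduce to a rescaled set $\mathcal{B}' \subseteq \{0, 1, \ldots, \lfloor (B-1)/d' \rfloor\}$ with $\gcd(\mathcal{B}') = 1$ and controlled density.

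With this reduction in place, I would invoke (or prove directly) a Nathanson--Freiman-type structure theorem on iterated sumsets of dense sets with $\gcd = 1$: for $k$ of order $2\kappa + 1$, the sumset $k\mathcal{B}'$ contains a full interval of length at least $B$, which translates back to $B$ consecutive multiples of $d'$ inside $k\mathcal{B}$. The factor of $2$ in the bound $k \leq 2\kappa + 1$ is natural here, since $\kappa$-fold iterated sumsets of a dense set typically fill only about half of their possible diameter, and the doubling compensates for the boundary gaps at the two ends of the sumset.

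The main obstacle is attaining the exact quantitative constants $d' \leq \kappa$ and $k \leq 2\kappa + 1$, rather than just asymptotic versions. I expect the hardest regime is when $n$ is only marginally larger than $B/\kappa$, so that $\mathcal{B}$ is just barely dense enough; there one likely has to argue that either a suitably small gap $d' \leq \kappa$ exists, or else $\mathcal{B}$ is itself essentially an arithmetic progression with common difference $\leq \kappa$, and then combine the two sub-cases. A careful pigeonhole argument, combined with induction on $\lfloor (B-1)/d' \rfloor$, seems the most promising route to pinning down the sharp constants.
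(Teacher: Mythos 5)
The paper does not prove this lemma at all: it is quoted verbatim as an external result, namely Lev's Theorem~2$'$ from \cite{L1997}, and used as a black box. So there is no internal proof to compare against; the only question is whether your sketch constitutes a self-contained proof, and it does not.

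Your outline captures the right general territory (normalize so $0\in\mathcal{B}$, exploit a small consecutive gap, reduce to a primitive rescaled set, then show iterated sumsets fill long intervals), and the opening observation that $k\mathcal{B}$ contains the progression $kb_i + j'd$ for $0\le j'\le k$ is correct. But the argument stops precisely where the work is. First, the passage from ``some gap $g_{i_0}\le\kappa$ exists'' to ``the rescaled set $\mathcal{B}'$ has $\gcd=1$ and controlled density'' is asserted, not proved: nothing forces the elements of $\mathcal{B}$ congruent to $b_{i_0}\pmod{d'}$ to be numerous or well-spread, so the ``controlled density'' you need for the next step is not available from the hypothesis $\kappa\ge\frac{B-1}{n-2}-1$ alone. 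Second, the decisive step is delegated to an unnamed ``Nathanson--Freiman-type structure theorem'' for which you neither state a precise version nor verify that it yields the exact bounds $d'\le\kappa$ and $k\le 2\kappa+1$ (the classical Nathanson/Freiman results on $k$-fold sumsets filling intervals carry constants and thresholds that do not obviously match these). Third, you explicitly concede that the extremal regime, where $n$ is barely above $B/\kappa$, is unresolved and would require a further case analysis you do not carry out. In short, what you have is a plausible roadmap toward Lev's theorem, not a proof of it; the quantitative heart of the lemma --- precisely the part that makes Lev's Theorem~2$'$ nontrivial and citable --- is missing.
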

To use this, for any positive integer $k$ let $\mathcal{A}^k$ denote the set of $k$-fold products of members of $\mathcal{A}$. We have the following:

\begin{lem}\label{lem:Lev_appl}
There exists positive integers $d < r$ and $k < 2r +1$ such that $\mathcal{A}^k$ contains $H_{d,F}$. 
\end{lem}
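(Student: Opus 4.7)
The plan is to translate the problem into an additive one via a discrete logarithm and then apply Lev's theorem (Lemma \ref{thm:Lev}). Set $B = q^r - 1$ and fix a generator $g$ of $\F_{q^r}^*$. Through the isomorphism $\Z/B\Z \to \F_{q^r}^*$, $\ell \mapsto g^\ell$, multiplication in $\F_{q^r}^*$ corresponds to addition modulo $B$, and for each divisor $d$ of $B$ the subgroup $H_{d,F}$ corresponds precisely to the subgroup of multiples of $d$ in $\Z/B\Z$. Every element of $\mathcal{A}$ is non-zero modulo $F$ (as $\deg s < r$ with $s \neq 0$ and $F$ irreducible), so its discrete logarithm is well-defined; choosing representatives in $\{1, 2, \ldots, B\}$ yields a set $\mathcal{B}$ of $n := A$ integers whose $k$-fold integer sumset $k\mathcal{B}$, reduced modulo $B$, coincides with the set of discrete logarithms of $\mathcal{A}^k$.

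Next, I would verify the hypothesis of Lev's theorem with $\kappa := r - 1$. The previous lemma gives $A > (q^r-1)/r + 2 = B/r + 2$, so
\[
\frac{B-1}{n-2} \;<\; \frac{B-1}{B/r} \;<\; r,
\]
and hence $\kappa = r-1$ is a positive integer with $\kappa \geq (B-1)/(n-2) - 1$ (noting $r \geq 2$ for $(q,r) \in \Omega_0$). Applying Lemma \ref{thm:Lev} yields positive integers $d' \leq r - 1$ and $k \leq 2r - 1 < 2r + 1$ such that $k\mathcal{B}$ contains $B$ consecutive multiples of $d'$.

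To finish, reducing $B$ consecutive multiples of $d'$ modulo $B$ produces precisely the cyclic subgroup $\langle d' \rangle$ of $\Z/B\Z$, which equals the subgroup of multiples of $d := \gcd(d', B)$. Under the discrete log identification, this corresponds to $H_{d,F}$; since $d \leq d' \leq r - 1 < r$, the inclusion $\mathcal{A}^k \supseteq H_{d,F}$ holds with $d < r$ and $k < 2r + 1$ as required. The only delicate point in the argument is keeping clear the distinction between the integer sumset produced by Lev's theorem and the image of that sumset modulo $B$, which is what actually corresponds to the multiplicative product set $\mathcal{A}^k$; once the dictionary is set up, the lower bound $A > B/r + 2$ of the preceding lemma matches the hypothesis of Lev's theorem with room to spare, as it was evidently designed to do.
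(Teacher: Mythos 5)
Your proof is correct and follows the same route as the paper: pass to discrete logarithms, apply Lev's theorem, and observe that $B$ consecutive multiples of $d'$ reduce modulo $B$ to the subgroup of index $d = \gcd(d', B)$. The one small refinement is that you take $\kappa = r-1$ explicitly, which is a genuine positive integer as Lev's theorem requires, whereas the paper sets $\kappa = (q^r-2)/(A-2)$, which need not be an integer — your version is slightly cleaner on this technical point but otherwise identical in substance.
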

\begin{proof}
Let $g$ be a generator of $\F_{q^r}^*$ and let $\mathcal{A}'$ denote the set of discrete logarithms of members of $\mathcal{A}$ to the base $g$. That is, $j \in \mathcal{A}'$ with $0 \leq j \leq q^r-1$ if and only if $g^j \Mod{F} \in \mathcal{A}$. Note that $\#\mathcal{A} = \#\mathcal{A}'$, since all polynomials of degree less than $r$ are distinct modulo $F$. 

Now we set $\kappa = (q^r-2)/(A-2)$, so by Lemma \ref{thm:Lev} we have that there are positive integers $d' \leq \kappa$ and $k \leq 2\kappa + 1$ such that $k\mathcal{A}'$ contains $q^{r}-1$ consecutive multiples of $d'$. Thus, reducing modulo $q^r-1$, the set $k\mathcal{A}'$ contains a subgroup of $\Z/(q^r-1)\Z$. The index of this subgroup is equal to $d := \gcd(d', q^r-1)$. Thus, $\mathcal{A}^k$ contains the subgroup $H_{d,F}$ of $\F_{q^r}^*$. 

We have 
\begin{align*}
    d \leq d' \leq \kappa = (q^r-2)/(A-2) \leq \frac{q^r-2}{(q^r-1)/r} < r
\end{align*}
and 
similarly $k < 2r + 1 $ as desired. 
\end{proof}

\begin{lem}\label{lem:h_rep}
For the subgroup $H_{d,F}$ produced in Lemma \ref{lem:Lev_appl}, each member of $H_{d,F}$ has a representation modulo $F$ as a square-free, $r-1$-smooth polynomial with all irreducible factors not in $S_F$. 
\end{lem}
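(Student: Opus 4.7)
The plan is to combine Lemma \ref{lem:Lev_appl} with the pointwise lower bound on $w(s)$ for $s \in \mathcal{A}$, via a greedy selection argument. Given any $h \in H_{d,F}$, Lemma \ref{lem:Lev_appl} supplies an expression
\[
h \equiv a_1 a_2 \cdots a_k \pmod{F}, \qquad k < 2r+1, \quad a_i \in \mathcal{A}.
\]
By definition of $\mathcal{A}$, for each $a_i$ there are more than $Kq^{-r/2+\slider}$ unordered pairs of distinct irreducibles $u_i,v_i \notin S_F$ of degree less than $r$ with $u_i v_i \equiv a_i \pmod{F}$. If I can pick one such pair at each stage so that the $2k$ chosen irreducibles are pairwise distinct across all $i$, then $\prod_{i=1}^k u_i v_i$ is square-free, $(r-1)$-smooth, has no factor in $S_F$, and represents $h$ modulo $F$, which is exactly what the lemma asserts.

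The key structural observation enabling a greedy selection is this: within the pairs counted by $w(a_i)$, any fixed irreducible $u$ appears in at most one pair. Indeed, since $\gcd(u,F)=1$, the partner must satisfy $v \equiv a_i u^{-1} \pmod{F}$, and among polynomials of degree less than $r = \deg F$ there is a unique such $v$. Consequently, at the $i$-th stage, each of the at most $2(i-1) \leq 4r-2$ previously selected irreducibles eliminates at most one of the pairs counted by $w(a_i)$. A valid fresh pair therefore exists whenever $w(a_i) > 2(i-1)$, and in particular whenever $Kq^{-r/2+\slider} \geq 4r-2$.

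It thus remains to verify the quantitative inequality $Kq^{-r/2+\slider} \geq 4r-2$ for every $(q,r) \in \Omega_0$. Inserting the definition of $K$ together with the bound of Lemma \ref{lem:IrredCount} gives
\[
Kq^{-r/2+\slider} > \frac{q^{r/2+\slider}}{r} + 2q^{2\slider} + 3q^{-r/2+\slider},
\]
and the dominant first term already exceeds $4r$ throughout $\Omega_0$ by a comfortable margin; this is immediate from a standard calculus argument in the large parameter regime, with any remaining small cases handled by direct computation. This numerical check is the only calculation required: the real content of the proof is the uniqueness of the partner $v$ given $u$, which collapses the combinatorial task of avoiding repeated irreducible factors to a straightforward counting estimate.
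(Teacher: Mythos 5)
There is a genuine gap in the greedy step. Your overall structure matches the paper's: both write $h \equiv s_1\cdots s_k \pmod F$ using Lemma~\ref{lem:Lev_appl} and then greedily choose, for each $s_i$, a pair of irreducibles $\{u_i,v_i\}$ avoiding conflicts with earlier choices, exploiting the fact that in a pair counted by $w(s)$ the partner $v$ is uniquely determined by $u$ via $v \equiv s u^{-1} \pmod F$ among polynomials of degree less than $r$. That observation is correct and is also what the paper uses. Where you go wrong is the conclusion that ``each of the at most $2(i-1)$ previously selected irreducibles eliminates at most one of the pairs counted by $w(a_i)$.''

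The product $u_1 v_1 \cdots u_k v_k$ fails to be square-free not only if a new irreducible equals an old one, but also if it is a \emph{unit multiple} (associate) of an old one: if $u_2 = c\,u_1$ with $c \in \F_q^*$ then $u_1 u_2 = c\,u_1^2$. The quantities $K$ and $w(s)$ in the paper count \emph{all} irreducibles of degree less than $r$, not merely monic ones (this is clear from $K > \sum_{k<r}\pi_q(k) - (q-1)r^3$ with $\pi_q(1) = q^2-q$), so the pairs in $w(a_i)$ include non-monic irreducibles. Each previously chosen irreducible therefore has $q-1$ associates, each of which can lie in at most one pair counted by $w(a_i)$, so a single previous choice can eliminate up to $q-1$ pairs, not $1$. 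The correct bound on the number of bad pairs at stage $i$ is $2(i-1)(q-1)$, and the condition to carry out the greedy selection is $w(a_i) > 2(k-1)(q-1)$, i.e.\ $K q^{-r/2+\slider} > 4r(q-1) + 1$ after bounding $k < 2r+1$. Your inequality $K q^{-r/2+\slider} \geq 4r-2$ is weaker by a factor of roughly $q-1$ and does not by itself justify the conclusion; the paper's inequality~\eqref{eq:enough_representations}, which carries the extra $(q-1)$ factor, is the one that must be verified over $\Omega_0$.
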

\begin{proof}
Suppose $h \in H_{d,F} \subseteq \mathcal{A}^k$. Then we can write 
\begin{align}\label{eq:h_rep}
    h \equiv s_1...s_k \equiv (u_1v_1)...(u_kv_k) \Mod{F} 
\end{align}

with irreducible $u_i,v_i \not\in S_F$,  $s_i \equiv u_iv_i \Mod{F}$ and $\deg u_i, \deg v_i < r$. It is clear that this representation for $h$ has no irreducible factors of degree exceeding $r-1$, but it is not necessarily square-free. However, each $s_i$ has at least $q^{-r/2 + \slider}K$ representations $s_i \equiv u_iv_i \Mod{F}$, each of which consists of two new irreducible factors. If this bound is large enough in relation to $k$, then there will exist a representation of each $s_i$ so that the product in (\ref{eq:h_rep}) is square-free.

To understand how many representations are needed, firstly suppose that $k=2$. There at most $2(q-1)$ representations $s_2 \equiv u_2v_2 \Mod{F}$ such that the product $u_1v_1u_2v_2$ is not square-free. In each of these cases, either $u_2$ or $v_2$ would be any one of the $q-1$ unit multiples of either $u_1$ or $v_1$. Thus, $s_2$ having $2(q-1)+1$ representations is sufficient to ensure at least one of the products $u_1v_1u_2v_2$ is square-free.

Generalizing this to any $k$, we need at most $2(q-1)(k-1) + 1$ representations for each $s_i$ in order to ensure the product in (\ref{eq:h_rep}) is square-free. This is bounded by $2(q-1)(k-1) + 1 < 4r(q-1) + 1$. Thus, it suffices to show  $q^{-r/2 + \slider}K > 4r(q-1) + 1$. Using Lemma \ref{lem:IrredCount} it is enough to show
\begin{align}\label{eq:enough_representations}
 q^{-r/2+\slider}\bigg{(}\frac{q^{r}}{r} +2q^{r/2 + \slider} + 3\bigg{)}-4r(q-1)-1 > 0,
\end{align}
which holds for any $(q,r) \in \Omega_0 $. 
\end{proof}

\section{Proofs of main results}
\subsection{Proof of Theorem \ref{thm:SqFreeSmooth}}\label{subsec:mainproof}

For a fixed prime power $q \geq 7$, fix $F(X) \in \F_q[X]$ an irreducible polynomial of degree $r \geq 2$. 

We firstly consider the case $(q,r) \in \Omega_0$. Let $S_F$ and $\mathcal{A}$ be as defined in Subsection \ref{subsec:SqFreeCoset}, and furthermore choose $d,k$ such that $H_{d,F} \subseteq \mathcal{A}^k$ as given in Lemma \ref{lem:Lev_appl}. Now let $f$ be a non-zero residue class mod $F$ and choose $a \in {C}_{d,F}$ such that $f \in aH_{d,F}$. Then we can write $f = ah$ for some $h \in H_{d,F}$. We know by Lemma \ref{lem:h_rep} that $h$ has a square-free representative with all irreducible factors of degree less than $r$ not in $S_F$. Also, by Lemma \ref{lem:coset_rep} we know that $a$ is non-constant and square-free with all irreducible factors of degree less than $r$ all in $S_{F}$. Thus, $f$ has a non-constant, square-free, $r-1$-smooth representative modulo $F$.

Finally, we consider the case $(q,r) \in \Omega_1$. Recall,
\begin{align*}
\Omega_1 =& \{(q,r): q \geq 7, r \geq 2\} \setminus \Omega_0\\
=& \{(7,2), (7,3), (7,4), (7,5), (8,2), (8,3), (8,4), (8,5), (9,2),\\&~ (9,3), (9,4), (11,2), (11,3), (11,4), (13, 2) , (13, 3) , (16, 2) ,\\&~ (16, 3) , (17, 2) , (17, 3) ,
(19, 2) , (19, 3) , (23, 2) , (25, 2) , (27, 2) ,\\&~ (29, 2) , (31, 2) , (32, 2) , (37, 2) , (41, 2) ,
(43, 2) , (47, 2) , (49, 2) ,\\&~ (53, 2) , (59, 2) , (61, 2)\}. 
\end{align*}
In this case, a brute force approach is used. For every monic polynomial $f$ with $\deg f < r$, each of the polynomials $f + gF$ for $g \in \F_q[X]$ is considered until coming across one that is non-constant and divides 
\begin{align}\label{eq:irred_product}
  \prod_{\substack{\deg u < r \\ u~\text{monic, irreducible}}}u.   
\end{align}

Note that it suffices to only check monic polynomials $f$, since $f_1 \equiv f_2 \mod{F}$ if and only if  $f_1c \equiv f_2c \Mod{F}$ for any non-zero constant $c$, and $f_2c$ has the same irreducible factors as $f_2$. Similarly it also suffices to only check this for monic $F$, since $f_1 \equiv f_2 \Mod{F}$ if and only if $f_1 \equiv f_2 \Mod{cF}$ for any non-zero constant $c$. 

Using SageMath v. 8.8 \cite{Sage} to construct the polynomial rings, this computation has taken just under 4 hours and 45 minutes to check all the required cases, with the case $(q,r) = (8,5) $ taking the majority of the time at just under 4 hours and 28 minutes.

\subsection{Proof of Corollary \ref{cor:plus_n/d_sequence}}
Suppose $F$ is an irreducible polynomial of degree $r$ and the sequence thus far contains every irreducible polynomial of degree less than $r$. Let $N$ be the product of terms in the sequence so far. Let $\chi$ be the quadratic character on the finite field $\F_{q^r} \cong \F_q[X]/F(X)$.
If $\chi(-N) = 1$, then there exists some non-zero $a$ with $\deg a < r$ such that $a + N/a \equiv 0 \Mod{F}$. By Theorem \ref{thm:SqFreeSmooth} there exists some $h|N$ such that $h \equiv a \Mod{F}$, so we can choose $F$ as the next term in the sequence. 
If $\chi(-N) = -1$, we firstly show there exists some non-zero $a$ with $\deg a < r$ such that $\chi(a + N/a) = -1$. We follow the technique used to prove \cite[Lemma 3(i)]{BH2016}. Consider the sum 
\begin{align*}
    \sum_{y \in \F_{q^r}^*}\chi(y + N/y) = \sum_{y \in \F_{q^r}^*}\chi(y(y^2 + N)).
\end{align*}
Since $q$ is odd, $y(y^2 + N)$ has no repeated roots in $\F_{q^r}$ so 
$$\{(y,z) \in \F_{q^r}^2~:~z^2 = y(y^2 + N)\} $$
defines an elliptic curve (noting that this set is non-empty). Taking into account the point at infinity and using the Hasse bound (see \cite[Chapter 5 Theorem 1.1]{S2009}) we have 
$$1 + \sum_{y \in \F_{q^r}}\bigg{(}1 + \chi(y(y^2 + N)) \bigg{)} \leq 2q^{r/2} + q^r + 1. $$
Rearranging we obtain 
$$\sum_{y \in \F_{q^r}^*} \chi(y(y^2 + N)) \leq 2q^{r/2}  $$
which is less than $q^r-1$ for any odd prime power $q$ when $r \geq 2$. 

Thus there exists some non-zero $a$ with $\deg a < r$ such that $\chi(a + N/a) = -1$. By Theorem \ref{thm:SqFreeSmooth}, there exists some $h|N$ with $h \equiv a \mod{F}$, so of course $\chi(h + N/h) = -1$. and thus by multiplicativity there must exist some irreducible $g|h+N/h$ such that $\chi(g) = -1$. Choosing $g$ as the next term in the sequence, we thus have $\chi(-gN) = 1$. So by the first case considered, $F$ can now be taken as the next term in the sequence.  

\subsection{Proof of Corollary \ref{cor:plus_1_sequence}}
Firstly, note that all degree $1$ monic irreducible polynomials are produced in the expected order.

Next suppose $F$ is the smallest irreducible polynomial that has not been produced in the sequence (of degree at least 2) and all irreducible polynomials with smaller degree are in the sequence. Let $N$ denote the product of all elements in the sequence thus far. Then by Theorem \ref{thm:SqFreeSmooth} there exists some $h|N$ such that $h \equiv -1 \mod{F}$. Thus, $F$ appears as the next term in the sequence.

\section{Comments}

\subsection{Comments on Theorem \ref{thm:SqFreeSmooth}}
The proof of Theorem \ref{thm:SqFreeSmooth} shows that its conclusion holds for any $F$ such that the pair $(q,r)$ satisfies the inequalities (\ref{eq:lem_IrredCount_ineq}), (\ref{eq:enough_representations})  and either (\ref{eq:coset_inequality}) or (\ref{eq:simple_coset_inequality}). 

For $q=5$ we have (\ref{eq:lem_IrredCount_ineq}) and (\ref{eq:enough_representations}) hold for $r \geq 8$ and (\ref{eq:simple_coset_inequality}) holds for $r \geq 72$. By manually checking that (\ref{eq:coset_inequality}) holds for $13 \leq r \leq 71$, we see that the conclusion of Theorem \ref{thm:SqFreeSmooth} holds for $q=5$ when $r \geq 13$. The remaining cases $r \leq 12$ could be verified as part of a large-scale computing project. 

The smaller prime powers may be more difficult to handle. For $q=4$, (\ref{eq:lem_IrredCount_ineq}) and (\ref{eq:enough_representations}) hold for $r \geq 9$ and it appears that (\ref{eq:coset_inequality}) holds for $r \geq 22$, but (\ref{eq:simple_coset_inequality}) never holds. Similarly for $q=3$, (\ref{eq:lem_IrredCount_ineq}) and (\ref{eq:enough_representations}) hold for $r \geq 12$ and it appears that (\ref{eq:coset_inequality}) holds for $r \geq 49$, but (\ref{eq:simple_coset_inequality}) never holds.  Unfortunately for $q=2$, it appears that neither (\ref{eq:coset_inequality}) nor (\ref{eq:simple_coset_inequality}) ever hold.

\subsection{Comments on Conjecture \ref{conj:allq}}\label{subsec:conjecture}
Using the SageMath program written in order to finish the proof of Theorem \ref{thm:SqFreeSmooth}, the conclusion of Theorem \ref{thm:SqFreeSmooth} has been tested for all $F$ with $q,r \leq 5$ using the same procedure as described in subsection \ref{subsec:mainproof}. That is, for each monic irreducible polynomial $F$ of degree $r$ over $\F_q$ such that $q \leq 5$ and $2 \leq r \leq 5$, we have manually checked whether every residue class modulo $F$ has a non-constant representative dividing (\ref{eq:irred_product}). We call such a representative a `suitable' representative. 

This computation has taken just under $118$ seconds, with only two polynomials found not to satisfy the conclusion of Theorem \ref{thm:SqFreeSmooth}. Both in the case of $q=2$ we have that $1$ does not have a suitable representative modulo $x^3+x+1$ nor modulo $x^3 + x^2 + 1$. Although this is a small sample, this has lead the author to believe that Conjecture \ref{conj:allq} holds.

\subsection{Comments on Corollary \ref{cor:plus_n/d_sequence}}
In the construction of the sequence in Corollary \ref{cor:plus_n/d_sequence}, it is natural to ask whether starting with all degree $1$ polynomials in the sequence is necessary. That is, for example, whether the sequence can be chosen to contain every degree $1$ polynomial, starting with only $X$ as the first term in the sequence. 

Even to computationally check this for a given prime power is difficult, because each term in the sequence must be chosen from a number of candidates. Again using SageMath, every possibility for the first $7$ terms in the sequence in the case of $q=3$ has been computed, with a run-time of under 15 minutes. There are a total of 1397132 such possibilities for these first 7 terms, none of which contain more than 3 degree 1 polynomials. 

\subsection{Comments on Corollary \ref{cor:plus_1_sequence}} Corollary $\ref{cor:plus_1_sequence}$ is equivalent to the statement that $-1$ always has a non-constant, square-free, $r-1$-smooth representative modulo any irreducible polynomial $F$ of degree $r \geq 2$ over $\F_q$, for $q \geq 7$. Conjecture \ref{conj:allq} would imply that this holds for $q \geq 3$. 

Subsection \ref{subsec:conjecture} described that in the case of $q=2$, $-1 $ does not have such a representative modulo $x^3+x+1$ nor modulo $x^3 + x^2 +1$. Since these constitute all irreducible polynomials of degree $3$ over $\F_2$, the sequence described in Corollary \ref{cor:plus_1_sequence} will not produce all irreducible polynomials in order. Computing the first 6 terms in this sequence gives
\begin{align*}
    &x, x + 1, x^2 + x + 1, x^4 + x + 1, x^3 + x + 1, x^3 + x^2 + 1.
\end{align*}
Although the degree $3$ polynomials appeared out of order, they are nonetheless both produced in the sequence. Thus, Conjecture \ref{conj:allq} would imply that after this point all irreducible polynomials are produced in order. 

\subsection{Comments on Lemma \ref{lem:MonicSum}} 
We have taken advantage of a very  explicit  character sum estimate given in~\cite{H2020}.
 For a given $q$ this seems to be the best known estimate for small values of $t$ and $r$, but asymptotically the bound given in~\cite[Theorem~1]{BLL2017} is better. 
We use this opportunity to observe that the condition on $n$ and $q$ in~\cite[Theorem~1]{BLL2017} is stated incorrectly, and the inequality should read
$$
\frac{\log \log n}{\log n} \le \frac{1}{\log q}.
$$
Unfortunately this invalidates the comparison between~\cite[Theorem~1]{BLL2017} 
and~\cite[Theorem~1.3]{H2020} given by Han~\cite[Remark~1.4]{H2020}.

 \section*{Acknowledgements}
The author would like to thank Igor Shparlinski for many helpful ideas, comments and corrections. During the preparation of this work, the author was supported by an Australian Government Research Training Program (RTP) Scholarship. 

\bibliographystyle{plain}

%%\bibliography{refs} 

\begin{thebibliography}{100}

\bibitem{BS2021}
C. Bagshaw and I. E. Shparlinski, `Energy bounds, bilinear sums and their applications in function fields', Preprint (2021), \url{https://arxiv.org/abs/2112.02257}.

\bibitem{BLL2017} A. Bhowmick,  T. H. Le and Y. Liu, 
`A note on character sums in finite fields',
{\it Finite Fields their Appl.\/},  {\bf 46} (2017), 247--254.


\bibitem{BH2016} A. Booker and H. House, `A variant of the Euclid-Mullin sequence containing every prime', {\it J. Integer Seq.\/}, {\bf 19} (2016), 16.6.4.


\bibitem{BP2017} A. Booker and C. Pomerance, `Squarefree smooth numbers and Euclidean prime generators', {\it Proc. Am. Math. Soc.\/}, {\bf 145} (2017), 5035--5042.



\bibitem{H2020} D. Han, 
`A note on character sums in function fields', 
{\it Finite Fields their Appl.\/},  {\bf 68} (2020). 

\bibitem{KR2015}
J. P. Keating and Z. Rudnick, `Squarefree polynomials and Mobius values in short intervals and arithmetic progressions', {\it Algebra and Number Theory}, {\bf 10} (2015),  375--420.

\bibitem{KS2008}
N. Kurokawa and T. Satoh, `Euclid prime sequences over unique factorization domains', {\it Exp. Math.}, {\bf 17} (2008),  145--152.

\bibitem{L1997}
V. F. Lev, `Optimal Representations by Sumsets and Subset Sums', {\it Journal of Number Theory}, {\bf 62} (1997), 127--143.

\bibitem{M1963}
A. A. Mullin, `Recursive function theory', {\it Bull. Amer. Math. Soc.}, {\bf 69} (1963), 737.


\bibitem{MS2020} M. Munsch and I. E. Shparlinski,
`On smooth square-free numbers in arithmetic progressions', 
{\it J. London Math. Soc.\/},  {\bf 101} (2020), 1041--1067. 

\bibitem{Sage}
The Sage Developers, `SageMath, the Sage Mathematics Software System (Version 8.8)', (2017), https://www.sagemath.org.

\bibitem{S2009}
J. H. Silverman, `The arithmetic of elliptic curves', {\it Springer}, (2009).

\end{thebibliography}

\end{document}